\documentclass [12pt] {article} 
\usepackage[affil-it]{authblk}

\usepackage{amsrefs} 

\usepackage{comment}
\usepackage{graphicx}
\usepackage{amssymb}
\usepackage{amsmath}
\usepackage{esint} 
\usepackage[section]{placeins}
\usepackage{mathrsfs}  
\usepackage{multirow}
\usepackage{amsthm}
\usepackage{float}
\usepackage{bbm}
\usepackage{colortbl}
\usepackage{xcolor}
\usepackage{graphicx}

\usepackage{mwe}
\usepackage{subfig} 
\usepackage{mathtools}
\usepackage[hypertexnames=false]{hyperref}
\newtheorem{thm}{Theorem}[section]
\newtheorem{prop}[thm]{Proposition}
\newtheorem{lem}[thm]{Lemma}

\newtheorem{rem}[thm]{Remark}

\newcommand{\be}{\begin{equation}}
\newcommand{\ee}{\end{equation}}

\newcommand{\sgn}{\text{sign}}

\usepackage{datetime} 
\usepackage[T1]{fontenc}
\usepackage[english] {babel}

\let\Re=\undefined\DeclareMathOperator{\Re}{Re}

\begin{document}

\baselineskip=15pt

\title{Splitting methods for Intermediate Long Wave and perturbed Benjamin--Ono models}

\author{
Yvonne Alama Bronsard%
\thanks{Nantes Université, CNRS, Laboratoire de Mathématiques Jean Leray, F-44000 Nantes, France, 
\texttt{yvonne.alamabronsard@univ-nantes.fr}},\  
Clémentine Courtès%
\thanks{Institut de Recherche Mathématique avancée, UMR 7501, Université de Strasbourg et CNRS, 7 rue René Descartes, 67000 Strasbourg, France $\&$ INRIA Nancy-Grand Est, MACARON Project, Strasbourg, France,
\texttt{courtes@math.unistra.fr}}\ ,
Benjamin Melinand%
\thanks{Institut de Recherche Mathématique avancée, UMR 7501, Université de Strasbourg et CNRS, 7 rue René Descartes, 67000 Strasbourg, France,
\texttt{melinand@unistra.fr},}
}

\maketitle

\begin{abstract}
Recently, novel integrability techniques, and most notably Birkhoff coordinates followed by an explicit formula, have been introduced to solve the Benjamin--Ono (BO) equation, leading to major theoretical and computational advances.
We propose to perturb this explicit BO formula via
a novel splitting method for numerically solving a class of PDEs, which includes the Intermediate Long Wave (ILW) equation, as well as other quasilinear equations that are not necessarily integrable. 
Using the Birkhoff coordinates of BO we establish first-order convergence in the $H^s$-norm ($s \ge 0$) while requiring only one additional derivative on the initial data $u_0 \in H^{s+1}_{0}$, significantly improving upon the regularity requirements of typical splitting methods for nonlinear dispersive equations.
Furthermore, we prove that in the deep-water limit, the scheme for the ILW equation converges to the BO solution. Computational advantages of these schemes are shown in simulations: unlike classical splitting methods, our approach does not require a restrictive quadratic time step condition to nearly preserve energy over long time scales, thus rendering efficient and accurate long-time simulations feasible. As an application, we numerically explore the soliton resolution conjecture for both the ILW and a KdV--BO equation.
\end{abstract}

\noindent {\scriptsize \textit{Keywords:} Nonlinear Fourier transform, quasilinear equations, splitting methods,  structure-preserving schemes, low-regularity, soliton resolution }\\
\noindent {\scriptsize\textit{Mathematics Subject Classification:} Primary – 37K10, 65M70; Secondary –
65M15, 35Q55, 35Q35} \\

\section{Introduction}

In this paper, we consider the approximation to the following class of perturbed Benjamin--Ono equation
\begin{equation}
\partial_t u = \partial_x |D| u - 2u \partial_x u + A u, \quad  u_{|t=0}(x) = u_0(x), \quad (t,x) \in \mathbb{R} \times \mathbb{T}, 
\label{eq:PDE}
\end{equation}
where $u(t,x)$ is real-valued, $D = \frac{1}{i}\frac{d}{dx}$, $\widehat{|D|f}(k) = |k|\widehat{f}(k)$ and \( A \) is a Fourier multiplier of order zero satisfying the following conditions
\begin{equation}\label{cond_a}
\widehat{Af}(k) = a(k) \widehat{f}(k), \quad a(-k) = \overline{a(k)}, \quad k \in \mathbb{Z} \quad \text{and } \|a\|_{\ell^\infty} = \sup_{k \in \mathbb{Z}} |a(k)| < \infty.
\end{equation}
We emphasize that no smallness assumptions or smoothing properties are required for the perturbative term $A$; for relevant PDE results in the former setting we refer to Remark \ref{rem:KAM}.

We will be considering initial data of nonnegative Sobolev regularity, for which global well-posedness for $u_0 \in H^s_{0} := \{ u_0 \in H^s(\mathbb{T}) : \int_{\mathbb{T}} u_0 = 0\}$, $s\ge 0$, has recently been obtained in Gassot--Laurens \cite{GL-25}.
For a discussion on the zero-mean condition, see Remark \ref{rem:zero-mean}.

We start by giving three examples of equations belonging to the class \eqref{eq:PDE}.
\smallskip

\noindent
{\bf The intermediate long wave equation.}
The (ILW) equation in the moving frame reads as
\begin{equation}\tag{ILW}\label{ILW}
\partial_t u = -i \coth(\delta D) \partial_x^2 u -2 u \partial_x u,
\end{equation}
with $\delta>0$. We can write \eqref{ILW} in the form \eqref{eq:PDE} with 
\begin{equation}\label{F:ILW}
a(k) =  i (\coth(\delta k) - \sgn(k)) k^2.
\end{equation}

\noindent
Equation \eqref{ILW} is an integrable model for long internal waves in fluids of finite total depth; we refer to Klein--Saut \cite{KS-21}*{Chapter 3} for a comprehensive overview of existing results. Regarding well-posedness, the recent work of Chapouto--Li--Oh--Pilod \cite{CLOP-24} establishes global well-posedness in $H^s(\mathbb{T})$ for $s \ge 0$ by exploiting the regularizing effect of the perturbation $A$. Subsequently, Gassot--Laurens \cite{GL-25} extended this global well-posedness theory to $s > -1/2$, which was identified as the scaling-critical regularity in Chapouto--Forlano--Li--Oh--Pilod \cite{chapouto2024intermediate}, and for which uniform-in-time a priori bounds were recently established by Harrop-Griffiths--Killip--Vi\c{s}an \cite{HKV-26}.
\smallskip

\noindent
{\bf The Smith equation.}
The equation reads
\begin{equation}
\tag{Smith}\label{Smith}
   \partial_t u =  \partial_x(1+|D|^2)^{\frac{1}{2}}u -  2 u \partial_x u.
\end{equation}
We can write \eqref{Smith} in the form \eqref{eq:PDE} with
\begin{equation*}
a(k)    = i k \sqrt{1+k^2}-i\, \sgn(k)k^2.
\end{equation*}

\noindent
This equation is a model for continental-shelf waves \cite{smith-72}, which is not known to be integrable, see also \cite{KS-21} and references therein. 
\newline

The last example we present is a recent model which can be seen as a Korteweg de Vries (KdV) perturbation of the \eqref{BO} equation and can be derived following a similar approach to that in \cite{M-26}. It is also not known to be integrable.
\smallskip

\noindent
{\bf A KdV--BO equation.} 
The equation is given by
\begin{equation}\tag{KdVBO}\label{KdVBO}
\partial_t u =- \frac{\partial_x + \partial_x^3}{\sqrt{1+|D|^2}} u - 2 u \partial_x u,
\end{equation}
which can we written in the form \eqref{eq:PDE} with 
\begin{equation*}
a(k)    = -i \frac{k - k^3}{\sqrt{1+k^2}} -i\, \sgn(k)k^2.
\end{equation*}

We notice that all the above examples of perturbations are in the case where $A$ is skew-symmetric. 
\newline

In the case where $a(k) = 0$ for all $k\in\mathbb{Z}$, we recover the Benjamin--Ono equation. Both the design and the convergence analysis for our scheme are based on new tools for solving this equation.
\smallskip

\noindent
{\bf The Benjamin--Ono equation.} The \eqref{BO} equation reads
\begin{equation}\tag{BO}\label{BO}
\partial_t v = \partial_x |D|  v - 2v \partial_x v.
\end{equation}
This equation models the propagation of unidirectional long internal waves at the interface of a two-layer fluid where the lower layer is of infinite depth, as rigorously justified by Paulsen~\cite{Paulsen2024}.

Using techniques from the theory of integrable systems, numerous advances have been obtained for \eqref{BO}. First, through the discovery of Birkhoff coordinates \cite{GK21}, a nonlinear Fourier transform, Gérard--Kappeler--Topalov \cite{GKT-23} demonstrated the global well-posedness of \eqref{BO} in $H^s(\mathbb{T})$ for $s>-\frac{1}{2}$ and ill-posedness otherwise. This nonlinear transformation will be essential for the convergence analysis of our scheme, see Section \ref{sec:strategy-pf}. Furthermore, circumventing the need for any nonlinear transform, Gérard \cite{G-23} established the existence of an explicit formula for solving \eqref{BO}. This remarkable formula allows one to express the solution at time $t$ in terms of the initial data $v_0$, through the composition of linear operators. Here, we present it as a characterization of the Fourier coefficients of the solution (see also Remark \ref{rem:formula-holomorphic}):
\begin{equation}\label{eq:explicitForm}
\widehat{v}(t,k) =\left\langle (e^{it}e^{2itL_{v_0}}S^{*})^k \Pi v_0, 1 \right\rangle, \quad k \ge 0,
\end{equation}
with  the Lax and shift operators
\[
{L}_{v_0}f = Df - \Pi(v_0f), \quad  S^{*}f= \Pi (e^{-ix}f), \quad f\in L^2_+,
\]
where $\widehat{\Pi f}(k) = \mathbbm{1}_{k\ge 0}\widehat{f}(k)$ and 
$L^2_{+} = \Pi L^2 = \bigl\{f\in L^2: \widehat{f}(k) = 0, \ k<0 \bigr\}$. 
Moreover, since $v$ is real-valued, we simply have $\widehat{v}(t,k) = \overline{\widehat{v}(t,-k)}$ for $k<0$. 

Recently, the works \cite{ABCD-25} and \cite{ABL-26} demonstrated the significant advantage of approximating this formula \eqref{eq:explicitForm} rather than applying existing numerical methods to \eqref{BO}. Namely, these works show that approximating \eqref{eq:explicitForm} yields schemes that are exact in time (i.e. they do not require a time discretization) as the formula provides an explicit representation of the solution at any time $t \in [0,T]$. Thus, not only do these schemes solely require a spatial discretization to be computable, but their computational cost is independent of the final time $T$
(see equation \eqref{eq:ABCD-scheme} and Remark \ref{rem:ABCD-err} for more detail). This observation is the starting point for the {\it construction} of our scheme: as \eqref{BO} can be computed exactly in time, yielding stable and spectrally accurate schemes, we propose to iteratively solve the \eqref{BO} flow and compose it with the flow generated by the perturbation over small time intervals, leading to a highly efficient and stable computational approach for solving \eqref{eq:PDE}. This is further detailed in the next section.

\subsection{Splitting schemes}\label{sec:splittingscheme}
We consider approximating \eqref{eq:PDE} using a novel first-order splitting scheme, which partitions the terms in a manner distinct from traditional Lie splitting methods. Specifically, we decompose the right-hand side of \eqref{eq:PDE} into the Benjamin--Ono part \( B(u) = \partial_x |D| u - 2u \partial_x u\) and the linear perturbation \( A u \):
\begin{align*}
\partial_t u = B(u) + Au, \ u(0) = u_0; \quad
\partial_t v = B(v), \ v(0) = v_0;
\quad
\partial_t z = Az,\  z(0) = z_0,
\end{align*}
where 
\[
u(t) = \Psi^{t}_{\rm full}(u_0), \quad  v(t) = \varphi^{t}_{\rm BO}(v_0) \quad \text{and} \quad  z(t) = e^{tA}z_0 \]
denote the solutions at time \( t \) to the respective initial value problems defined in $H^s_0$, $s\ge 0$.

A crucial advantage of this decomposition (as opposed to classical Lie splittings) is that both sub-flows $\varphi^{t}_{\rm BO}$ and $e^{tA}$ can be computed {\it exactly in time} and are well defined in $H^s_0$ {\it for all} $t\ge 0$. Indeed, Gérard's explicit formula \eqref{eq:explicitForm} provides a closed-form expression for \( \varphi_{\rm BO}^t w_0 \), and \cites{ABCD-25, ABL-26} allow for its efficient implementation in Fourier space. For the linear part, since \( A \) is a diagonal operator in Fourier space, the evolution \( e^{tA}z_0 \) is evaluated exactly in this basis. Consequently, the only time-discretization error in our method stems from the splitting procedure itself. Our scheme is presented next.

\medskip
\noindent
\textbf{The perturbative splitting scheme.}
Let \( \tau \in (0, 1] \) be the time step. Our splitting scheme approximates the solution at time \( t_{n+1} = t_n + \tau \) by composing these exact flows over each time step:
\begin{equation}
u^{n+1} =  \varphi^{\tau}_{\rm BO} e^{\tau A} u^n, \quad u^0 = u_0.
\label{eq:Lie}
\end{equation}

For a discussion on the implementation of the fully-discrete scheme we refer to Section \ref{sec:numerics} (see also Remark \ref{rem:space-approx}).

\begin{rem} We emphasize that for initial data \( u_0 \in H^s_0 \) satisfying the zero-mean condition, i.e., \( \int_{\mathbb{T}} u_0 = 0 \), the splitting schemes (as well as its fully discrete counterpart) exactly preserve this property, ensuring that \( \int_{\mathbb{T}} u^n = 0 \) for all \( n \ge 0 \). This is necessary to obtain convergence, which we establish in $H^s_0$, by exploiting the analyticity properties of the Birkhoff map in these spaces \cite{GKT-21, GL-25}, see Section \ref{sec:strategy-pf} for more detail.
\end{rem}

Next, we motivate our choice of splitting by comparing it with classical Lie splitting schemes.

\medskip
\noindent
{\bf Comparison with the classical Lie splitting method.} 
Two effects are present in \eqref{eq:PDE}: the Burgers nonlinearity $\partial_x(u^2)$, which set alone forms shocks in finite time, and the linear terms $\mathcal{L} := \partial_x|D|+ A$, whose evolution is bounded in all Sobolev norms on any compact time interval, and even preserves the $L^2$ norm when $A$ is skew-adjoint, as is the case in all the above examples. Consequently, in the error analysis of time-discretization schemes for \eqref{eq:PDE}, the central challenge lies in establishing stability bounds to control the derivative loss in the nonlinearity.

The Lie splitting scheme relies on decomposing the governing equation $\partial_tu - \mathcal{L}u =- \partial_x (u^2)$ into the linear equation $\partial_tu = \mathcal{L}u$ and the inviscid Burgers equation $\partial_tu = -\partial_x (u^2)$. The scheme advances the solution over a time step $\tau$ by composing their respective flows, yielding 
\begin{equation}\label{eq:lie-splitting}
u^{n+1} = (\Phi_{\mathcal{N}}^\tau \circ \Phi_{\mathcal{L}}^\tau)(u^n),
\end{equation}
where $\Phi_{\mathcal{L}}^\tau = e^{\tau \mathcal{L}}$ is the (exact) linear propagator and $\Phi_{\mathcal{N}}^\tau$ is the nonlinear Burgers flow.

The error analysis of such splitting methods is typically conducted assuming that the Burgers flow $\Phi_{\mathcal{N}}^\tau$ is computed exactly in time, and the key difficulty in the analysis lies in the fact that Burger's flow eventually produces discontinuities independently of the smoothness of the initial data, as the underlying
equation is not well posed in any Sobolev space with positive regularity. 
For smooth solutions we further refer to \cites{HKRT-11, HLR-13} in the context of the KdV equation and to \cite{DHKR-15} in the context of the \eqref{BO} equation, where energy estimates are employed. For the approximation to low-regularity solutions to KdV we further refer to \cite{rousset-22}, which introduces discrete Bourgain spaces and a quadratic time step condition to guarantee stability of the method.

We emphasize that, a priori, our analysis is even more challenging than that for the KdV equation. While the KdV equation features the third-order Airy dispersive operator $\partial_{xxx}$, our linear dispersive operator $\partial_x|D| + A$ is only of second order. Consequently, the high-frequency dispersion present in the equation is significantly reduced, rendering the control of the derivative in the nonlinearity a much harder problem. Unlike the KdV equation, where the dispersion is strong enough to permit semilinear arguments, when working with solutions that lack high smoothness (for which energy estimates cannot be applied), this weaker dispersion cannot overcome the derivative loss induced by the nonlinearity using standard Duhamel or Picard arguments. As a result, the equations studied here are of quasilinear nature, rendering the analysis more challenging and requiring different techniques. 

To enable the analysis of our splitting scheme \eqref{eq:Lie}, we introduce a novel approach that uplifts the Birkhoff coordinates, see Sections \ref{sec:strategy-pf} and \ref{sec:convergenceresults}. Remarkably, this, together with our choice of splitting, allows us to obtain a significantly better convergence result than that of classical splitting methods, even when compared to the KdV, BO, and NLS equations.

\subsection{Main results}

First, we establish the convergence of the splitting scheme \eqref{eq:lie-splitting} to the exact solution of \eqref{eq:PDE} as the time step $\tau$ goes to zero: by requiring one additional Sobolev regularity on the solution, we obtain first-order convergence in time.

\begin{thm}[Convergence of the perturbative splitting scheme]\label{thm:cv-time}
Let $s\ge 0$, $T>0$, $u_0\in H^{s+1}_0$, and consider $A$ satisfying \eqref{cond_a}. Let $u \in \mathcal{C}([0,T], H^{s+1}_0)$ be the solution of \eqref{eq:PDE} with initial datum $u_0$, and let $u^n$ denote the splitting scheme \eqref{eq:Lie}. Then there exist a threshold step-size $\tau_0 \in (0,1]$ and a constant $C>0$ such that for any $\tau \in (0,\tau_0]$,
\[
\sup_{n\tau \le T}\|u(n\tau)-u^n \|_{H^s} \le C \| a \|_{\ell^{\infty}} \tau.
\]
\end{thm}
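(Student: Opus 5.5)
The plan is to run the whole error analysis in Birkhoff coordinates of \eqref{BO}, in which the Benjamin--Ono flow is linear, so that the only source of nonlinear difficulty becomes the conjugated perturbation $A$. Let $\Phi: H^s_0 \to h^{s+1/2}_+$ denote the Birkhoff map of \cite{GK21,GKT-23}, a real-analytic diffeomorphism; on bounded sets $\Phi$, $\Phi^{-1}$ and their differentials are bounded and Lipschitz between $H^{\sigma}_0$ and $h^{\sigma+1/2}_+$ for $\sigma\in\{s,s+1\}$, by the analyticity results of \cite{GKT-21,GL-25}. Conjugating, $\varphi^t_{\rm BO}$ becomes the diagonal unitary map $\zeta_n\mapsto e^{i\omega_n(\zeta)t}\zeta_n$, whose frequencies $\omega_n$ depend only on the actions $|\zeta_k|^2$. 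Set $\zeta(t)=\Phi(u(t))$. Then \eqref{eq:PDE} becomes
\[
\partial_t\zeta = i\Omega(\zeta)\zeta + X(\zeta), \qquad X(\zeta)=d\Phi(\Phi^{-1}\zeta)\,A\,\Phi^{-1}(\zeta),
\]
and the scheme becomes $\zeta^{n+1}=\mathcal{B}^\tau\,\Phi e^{\tau A}\Phi^{-1}(\zeta^n)$, where $\mathcal{B}^\tau$ is the Birkhoff BO flow. Because $A$ is of order zero, $X$ maps bounded sets of $h^{\sigma+1/2}$ to $h^{\sigma+1/2}$ for $\sigma\in\{s,s+1\}$ with bound $\lesssim\|a\|_{\ell^\infty}$, and $X$ is Lipschitz at level $s$. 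This is the sense in which we ``uplift'' the Birkhoff coordinates.

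\emph{Local error.} Starting from $\zeta(t_n)$, I would write Duhamel for the exact flow around $\mathcal{B}^t$:
\[
\zeta(t_n+\tau)=\mathcal{B}^\tau\zeta(t_n)+\int_0^\tau(\dots)X(\zeta(t_n+r))\,dr .
\]
Since the frequencies depend on the actions, which $X$ also moves, this is better done in the variation-of-constants form using $d\mathcal{B}^{\tau-r}$. The scheme gives $\mathcal{B}^\tau\bigl(\zeta(t_n)+\tau X(\zeta(t_n))+O(\tau^2)\bigr)$. The difference consists of two pieces. The first is $\int_0^\tau\bigl[d\mathcal{B}^{\tau-r}X(\zeta(t_n+r))-d\mathcal{B}^{\tau}X(\zeta(t_n))\bigr]dr$. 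The second is the Taylor remainder of $\mathcal{B}^\tau$ and of $\Phi e^{\tau A}\Phi^{-1}$. Each piece is $O(\tau^2)$ in $h^{s+1/2}$, with constant proportional to $\|a\|_{\ell^\infty}$. For the time-regularity of $d\mathcal{B}^{\sigma}$ and of $\zeta$, I would use $|\partial_\sigma e^{i\omega_n\sigma}|\lesssim |\omega_n|\sim n$ together with the assumption $u\in C([0,T],H^{s+1})$, i.e.\ $\zeta\in h^{s+3/2}$. This is exactly where the one extra derivative is consumed. The frequencies $\omega_n$ grow like $n$ plus a bounded correction, so trading $\tau\,|\omega_n|$ against one weight costs precisely one derivative.

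\emph{Stability and conclusion.} In Birkhoff coordinates, $\mathcal{B}^\tau$ is an isometry of $h^{s+1/2}$ in modulus. Its Lipschitz constant is nevertheless not $1$, because of the phase $e^{i\omega_n(\zeta)\tau}$. Comparing two data, the phase difference contributes $\tau\,|\omega_n(\zeta)-\omega_n(\tilde\zeta)|\,|\zeta_n|$. By the structure of the BO frequencies, $\omega_n(\zeta)-\omega_n(\tilde\zeta)$ is controlled by differences of partial sums of actions, and these are bounded by $\|\zeta-\tilde\zeta\|$ at low regularity times norms of $\zeta$. So the bound is $(1+C\tau)\|\zeta-\tilde\zeta\|_{h^{s+1/2}}$, provided one of the two sequences is bounded in $h^{s+3/2}$; the exact solution supplies that. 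The map $\Phi e^{\tau A}\Phi^{-1}$ is $(1+C\tau)$-Lipschitz by the mapping properties of $X$. A Lady Windermere's fan argument then gives
\[
\|\zeta(t_n)-\zeta^n\|_{h^{s+1/2}}\le e^{CT}\,n\,C\tau^2\|a\|_{\ell^\infty}\lesssim\tau .
\]
I would close this by bootstrap, keeping $\zeta^n$ in a fixed ball for $\tau\le\tau_0$. Pulling back through the Lipschitz map $\Phi^{-1}$ on bounded sets yields the $H^s$ estimate.

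\emph{Main obstacle.} I expect the hard part to be the uniform, Lipschitz mapping property of the conjugated perturbation $d\Phi\,A\,\Phi^{-1}$ at two regularity levels, together with the action-dependent phase Lipschitz bound. Together these encode the quasilinear derivative loss. I would first try to obtain them from the analyticity of $\Phi$ on $H^\sigma_0$ in \cite{GKT-21,GL-25} and the explicit formula for $\omega_n$.
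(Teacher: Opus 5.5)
\textbf{Your stability step is fine; the gap is in the local error.} The stability part matches the paper's Lemmas \ref{lem:stab-full}, \ref{lem:stab-split} and its one-step induction. Your proviso that one sequence be bounded in $\mathfrak{h}^{s+3/2}$ is unnecessary. Indeed, $|\omega_n(\zeta)-\omega_n(\tilde\zeta)|\le 2\sum_k k\,\bigl||\zeta_k|^2-|\tilde\zeta_k|^2\bigr|$ is uniform in $n$ and controlled at $\mathfrak{h}^{1/2}$.

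The local error step fails because the Benjamin--Ono frequencies are $\omega_n=n^2-2\sum_k\min\{k,n\}|\zeta_k|^2$. So $|\omega_n|\sim n^2$, not $n$; the symbol of $\partial_x|D|$ is $ik|k|$. You estimate the time variation of $d\mathcal{B}^{\tau-r}$ and of $\zeta(t_n+r)$ separately. This requires bounding $\min\{1,rn^2\}|X_n|$ in $\mathfrak{h}^{s+1/2}$ knowing only $X\in\mathfrak{h}^{s+3/2}$, and $\sup_n n^{-1}\min\{1,rn^2\}\sim r^{1/2}$. Likewise, $\partial_t\zeta=i\omega(\zeta)\zeta+X(\zeta)$ lies in $\mathfrak{h}^{s+1/2}$ only when $\zeta\in\mathfrak{h}^{s+5/2}$.

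Consequently, from $u_0\in H^{s+1}_0$ your argument gives a local error of $O(\tau^{3/2})$ and hence global order $1/2$. Getting order one this way would need $u_0\in H^{s+2}_0$. The two $n^2$ contributions do cancel when treated jointly. For instance, in the commutative model $X(\zeta)=(a_n\zeta_n)$ your integrand is exactly $e^{i\tau n^2}(e^{ra_n}-1)a_n\zeta_n(t_n)$. However, exhibiting this cancellation in Birkhoff variables amounts to controlling the commutator of the conjugated field $X$ with the BO flow, and your plan does not address that.

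\textbf{How the paper avoids this.} It does the local error in physical space (Lemma \ref{eq:loc-err}). Take $v(t)=\varphi^t_{\rm BO}(e^{\tau A}u_0)$ and $u(t)=\Psi^t_{\rm full}u_0$, and write Duhamel for each around the linear groups. Since $\partial_x|D|$ and $A$ are commuting Fourier multipliers, $e^{\tau\partial_x|D|}e^{\tau A}=e^{\tau(\partial_x|D|+A)}$. Hence the linear parts, which carry all of the $n^2$ dispersion, cancel exactly:
\[
u(\tau)-v(\tau)=-\int_0^\tau e^{(\tau-r)\partial_x|D|}\bigl(e^{(\tau-r)A}\partial_x u^2(r)-\partial_x v^2(r)\bigr)\,dr .
\]
The only remaining loss is the $\partial_x$ of the Burgers term. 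It therefore suffices to show $\sup_{r\le\tau}\|u(r)-v(r)\|_{H^{s+1}}\lesssim\|a\|_{\ell^\infty}\tau$, which is Lemma \ref{lem:tau-bd} applied at level $s+1$. This is where Birkhoff coordinates enter. In $\Omega_n(t;u)-\Omega_n(t;v)$ the $n^2t$ terms cancel, and what remains is bounded uniformly in $n$ (Lemma \ref{lem:comp_BO_Eq}). That cancellation, not a frequency growth $\omega_n\sim n$, is why only one extra derivative is needed.
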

\noindent The threshold $\tau_0$ is given explicitly in \eqref{eq:tau_0}, and both $\tau_0$ and $C$ depend continuously on $s$, $\| u_0\|_{H^{s+1}}$, $\|a \|_{\ell^{\infty}}$, and $T$. Furthermore, $C$ grows exponentially with $T$, see Remark \ref{rem:exp-dep}.

\medskip

Second, in the case of the \eqref{ILW} equation, we provide an additional result regarding the deep-water limit from \eqref{ILW} to \eqref{BO} as $\delta \to \infty$. This limit was rigorously established on the continuous level, see for example the recent works \cites{CLOP-24, GL-25}. We now present a discrete analogue for our semi-discrete scheme: we obtain convergence of the splitting scheme for the ILW equation to the BO equation $\varphi^{t}_{\rm BO}$, with explicit rates in terms of the depth parameter $\delta$ and the time step $\tau$.

\begin{thm}[Convergence of the ILW scheme to BO]\label{thm:cv-ILW-BO}
Let $s\ge 0$, $T>0$ and $u_0 \in H^{s+1}_{0}$. Let $u \in \mathcal{C}([0,T], H^{s+1}_0)$ be the solution of \eqref{ILW} with initial datum $u_0$ and perturbation $A$ defined by \eqref{F:ILW} with parameter $\delta$. Let $u^n$ denote the splitting scheme \eqref{eq:Lie}. Then there exist a threshold $\delta_0 \ge 1$ and constants $C>0$ and $C_0>0$ such that for all $\delta \ge \delta_0$ and any $\tau \in (0,1]$, we have
\[
\sup_{n\tau \le T}\|u^n-\varphi^{n\tau}_{\rm BO} u_0 \|_{H^s} \le \left(C_0 T e^{C_0 T} + C  \tau\right) e^{-2 \delta}.
\]
\end{thm}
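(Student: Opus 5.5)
The plan is to reduce everything to the smallness of the symbol \eqref{F:ILW} in $\ell^\infty$. Then compare the scheme \eqref{eq:Lie} with the BO flow step by step, working in Birkhoff coordinates, where $\varphi^t_{\rm BO}$ is a phase rotation.

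\textbf{Step 1: smallness and skew-symmetry of $A$.} For $k\neq 0$ we have $\coth(\delta|k|)-1 = 2/(e^{2\delta|k|}-1)$. Hence
\[
|a(k)| = \frac{2k^2}{e^{2\delta|k|}-1}\le 4k^2e^{-2\delta|k|}\le C_1 e^{-2\delta}, \qquad \delta\ge 1,
\]
because $\sup_{k\ge1}k^2e^{-2\delta(k-1)}$ is bounded uniformly in $\delta\ge1$. The zero mode is irrelevant on $H^s_0$, and in any case $k^2\coth(\delta k)\to 0$ as $k\to 0$. Thus $\|a\|_{\ell^\infty}\le C_1e^{-2\delta}$. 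Moreover $a(k)\in i\mathbb{R}$, so $e^{tA}$ is unitary on every $H^\sigma_0$, and $\|(e^{\tau A}-I)w\|_{H^\sigma}\le \tau\|a\|_{\ell^\infty}\|w\|_{H^\sigma}$.

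\textbf{Step 2: telescoping and a one-step error.} I would write
\[
u^n-\varphi^{n\tau}_{\rm BO}u_0=\sum_{k=0}^{n-1}\Bigl(\varphi^{(n-k-1)\tau}_{\rm BO}\varphi^\tau_{\rm BO}e^{\tau A}u^k-\varphi^{(n-k-1)\tau}_{\rm BO}\varphi^{\tau}_{\rm BO}u^k\Bigr).
\]
Each summand compares the BO flow over time $(n-k)\tau$ started from two data that differ by $(e^{\tau A}-I)u^k$. That difference has size $\le \tau C_1e^{-2\delta}\|u^k\|_{H^s}$ in $H^s$, and likewise in $H^{s+1}$. The argument therefore needs two ingredients.

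(a) A uniform bound $\sup_{n\tau\le T}\|u^n\|_{H^{s+1}}\le R$ with $R$ depending only on $\|u_0\|_{H^{s+1}}$. I would get it in Birkhoff coordinates $\zeta=\Phi(u)$. BO preserves the actions $|\zeta_n|^2$ exactly, and $H^{s+1}$ norms are controlled by the weighted action norm through the analyticity and local Lipschitz bounds of $\Phi,\Phi^{-1}$ (\cite{GKT-21, GL-25}). The step $e^{\tau A}$ changes $\Phi(u)$ by $O(\tau e^{-2\delta})$ on bounded sets. Summing over $n\le T/\tau$ steps gives a drift of order $Te^{-2\delta}$, so a bootstrap closes once $\delta\ge\delta_0$, with $\delta_0$ large depending on $T$ and $\|u_0\|_{H^{s+1}}$. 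This bootstrap is what requires $\delta$ large; no smallness of $\tau$ is needed, consistent with $\tau\in(0,1]$.

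(b) Stability of $\varphi^t_{\rm BO}$ in $H^s$ on balls of $H^{s+1}$. In coordinates the flow is $\zeta_n\mapsto e^{it\omega_n(I)}\zeta_n$, with frequencies $\omega_n$ depending Lipschitz-continuously on the actions. The difference of two flows is therefore bounded by the initial difference plus $t\,|\omega(I)-\omega(I')|$ times the data. This is where the extra derivative is consumed: the frequencies grow like $n^2$, which costs one derivative relative to $s$.

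\textbf{Step 3: Gronwall and the main obstacle.} Rather than summing crude Lipschitz bounds, which would give growth in $n$, I would run a discrete Gronwall argument directly on $e_k=\|\Phi(u^k)-\Phi(\varphi^{k\tau}_{\rm BO}u_0)\|_{h^{s+1/2}}$. Using (a) and (b) one gets
\[
e_{k+1}\le (1+C_0\tau)e_k + C_0\tau e^{-2\delta}.
\]
This yields $e_n\le C_0Te^{C_0T}e^{-2\delta}$. Transferring back through $\Phi^{-1}$ gives the first term of the bound. The residual $C\tau e^{-2\delta}$ term accounts for the last partial step and the mismatch between applying $e^{\tau A}$ before versus after $\varphi^\tau_{\rm BO}$ at the endpoint. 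Alternatively, if $\tau\le\tau_0$, one could route through Theorem~\ref{thm:cv-time} together with the continuous deep-water estimate; but the direct argument avoids the constraint $\tau\le\tau_0$.

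I expect the main obstacle to be Step 2(b) combined with the Gronwall estimate. One must show that the difference of phases $e^{it\omega_n(I)}-e^{it\omega_n(I')}$ is controlled linearly by the action difference uniformly in $n$, with only the one-derivative loss. This requires the precise structure of the BO frequencies, which are affine in the actions. It also requires Lipschitz bounds for the Birkhoff map that are uniform on $H^{s+1}$ balls, so that the constant $C_0$ is independent of both $\tau$ and $\delta$.
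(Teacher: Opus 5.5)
Your proposal is correct, but it takes a different route from the paper. The paper never compares the scheme with BO directly. It inserts the exact ILW solution $u^\delta(n\tau)$ and splits the error with the triangle inequality:
\begin{itemize}
\item $\|u^n-u^\delta(n\tau)\|_{H^s}$ is bounded by Theorem~\ref{thm:cv-time} with $\|a\|_{\ell^\infty}\le 3e^{-2\delta}$. This is where the $C\tau e^{-2\delta}$ term comes from, and the restriction $\tau\le\tau_0$ disappears because $\tau_0=1$ once $\delta_0$ is large (Remark~\ref{rem:tau_0}).
\item $\|u^\delta(n\tau)-\varphi^{n\tau}_{\rm BO}u_0\|_{H^s}$ is bounded by the continuous comparison of Proposition~\ref{prop:comparison_ILW_BO}. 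That comparison rests on Lemma~\ref{lem:comp_BO_Eq} and on the $\delta$-uniform a priori bound of Proposition~\ref{prop:a-priori-ILW} from \cite{CLOP-24}.
\end{itemize}
Your discrete Gronwall uses only Lemma~\ref{lem:stab-full} and the bound $\|\Phi(e^{\tau A}w)-\Phi(w)\|_{\mathfrak{h}^{s+1/2}}\le C\tau\|a\|_{\ell^\infty}$ on bounded sets, which follows from \eqref{d-1}. It therefore needs neither the ILW solution, nor its well-posedness theory, nor the local error analysis. This makes sense, since the left-hand side of the estimate involves only $u^n$ and $\varphi^{n\tau}_{\rm BO}u_0$. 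The paper's route is shorter given what has already been proved. Yours is self-contained and gives a sharper statement: the recursion $e_{k+1}\le e^{c_1\tau}\bigl(e_k+C\tau e^{-2\delta}\bigr)$ with $e_0=0$ already yields $\sup_{n\tau\le T}\|u^n-\varphi^{n\tau}_{\rm BO}u_0\|_{H^s}\le C_0Te^{C_0T}e^{-2\delta}$, with no $C\tau$ term.

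Two corrections to your write-up.

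First, there is no derivative loss in Step~2(b). In $\omega_n(f)-\omega_n(g)$ the $n^2$ terms cancel. Since BO conserves the actions, $\Omega_n(t;v)=t\,\omega_n(f)$, and
\[
|e^{it\omega_n(f)}-e^{it\omega_n(g)}|\le 2t\sum_{k\ge1}k\,\bigl||\zeta_k(f)|^2-|\zeta_k(g)|^2\bigr|,
\]
which is bounded uniformly in $n$ by $\mathfrak{h}^{1/2}$ norms. So the bootstrap in (a) can be run in $\mathfrak{h}^{s+1/2}$, using only $H^s$ bounds, and your argument works for $u_0\in H^s_0$. The hypothesis $u_0\in H^{s+1}_0$ is needed only by the paper's route through Theorem~\ref{thm:cv-time}.

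Second, your explanation of the $C\tau e^{-2\delta}$ term (a ``last partial step'' or an endpoint mismatch) does not make sense, because $u^n$ sits exactly at time $n\tau$. Your bound is stronger than the claimed one, so the extra term is simply unnecessary and does no harm.
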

\noindent
Both $\delta_0$ and $C$ depend continuously on $s$, $\|u_0\|_{H^{s+1}}$, and $T$, while $C_0$ depends continuously on $s$, $\|u_0\|_{H^{s}}$, and $T$. For a discussion on the threshold requirements on $\tau$, we refer to Remark~\ref{rem:tau_0}.
\smallskip

Next, we compare our convergence results to those of classical splitting schemes for the KdV and BO equations, which also have Burgers nonlinearity, and then to existing numerical schemes for the ILW equation.
\smallskip

\noindent
{\bf Error analysis of classical splitting methods for KdV and BO.} For an analysis at second order of the Strang splitting method for solving the KdV equation, \cite{HKRT-11} and \cite{HLR-13} show convergence at second order in $H^s$-norm  for $u_0 \in H^{\alpha}$, $\alpha = s + 5$, $s \geq 1$, under the assumption that Burgers equation is solved exactly. Applying the same analysis on the Lie splitting method would yield first-order convergence for $\alpha = s + 3$ \textit{i.e.} requiring three additional derivatives on the solution. For convergence at a fractional order $0<\nu<1$ less than one, we refer to the work of Rousset--Schratz \cite{rousset-22} which employ discrete Bourgain spaces and Duhamel's formula to obtain the regularity condition $\alpha = s + 3\nu$ of the Lie splitting method, extending the prior results to this fractional low-regularity setting. We note that their stability analysis requires a quadratic time step condition.

Furthermore, for the \eqref{BO} equation, Dutta--Holden--Koley--Risebro \cite{DHKR-15} show $L^2$ convergence of the Lie splitting method for $u_0 \in H^{5/2}$, requiring $2.5$ orders of extra regularity: two additional derivatives come from the linear operator of order two, while the remaining half-derivative arises from their stability bound, which is based on Tao's gauge transform (see \cite[Theorem 2.2]{DHKR-15} for details). We note that this work preceded the discovery of the explicit formula \eqref{eq:explicitForm}; the recent numerical results \cites{ABCD-25, ABL-26} now demonstrate that no time discretization is required to solve this equation.

In general, achieving first-order convergence in $H^s$ for a Lie splitting method requires, at best, the regularity $\alpha = s + \theta$, where $\theta$ denotes the order of the linear operator. In contrast, our new choice of splitting yields $\alpha = s +1$. 
\smallskip

\noindent
{\bf Error analysis results for ILW.}
Regarding prior error analysis results for the ILW equation, existing literature has used discrete energy methods for analyzing pseudo-spectral discretizations in space coupled with finite-difference approximations in time. This includes the $H^{1/2}$ energy norm estimates by Deng--Ma \cite{DM09}, and the $L^2$-error bounds in Pelloni--Dougalis \cite{PD01}, which necessitate highly smooth initial data ($u_0 \in H^r$ for $r \ge 7$). Here, we depart from these energy techniques. Instead, we exploit the underlying structure of this PDE class by treating the equation as a perturbation of the Benjamin-Ono (BO) equation, for which significant insight into its integrability structure has recently been obtained. This approach yields significantly sharper results, allowing for an error analysis in any $H^s$-norm ($s \ge 0$) for initial data in $H^{s+1}_0$.

\begin{rem}[On the behavior of small perturbations of integrable systems]\label{rem:KAM}
Small perturbations of integrable equations have been studied via extensions of KAM theory. For studies concerning perturbations of the BO equation on the torus, we refer to \cites{LY-11,MZ-14}, as well as \cite{BG-21}, which considers generalized Benjamin--Ono equations with generic small initial data, and \cite{BG-24}, which addresses small Hamiltonian perturbations of the BO equation using the Birkhoff coordinates of BO (see Section \ref{sec:birkhoffcoord}). These coordinates will play a central role in the analysis of the splitting schemes presented in Section \ref{sec:splittingscheme} for solving the class of equations \eqref{eq:PDE}.
\end{rem}

\begin{rem}[Zero-mean condition]\label{rem:zero-mean}
Recall that $H^s_0$ denotes the subspace of $H^s(\mathbb{T})$ consisting of functions with zero mean.
As the mean value $c = \fint_{\mathbb{T}} u dx$ is a conserved quantity for \eqref{BO}, we have by Galilean symmetry that $\tilde u (t,x) = u(t, x+2 ct) - c$ is also solution of \eqref{BO}. Hence,
we can enforce the
condition that the solution to \eqref{BO} belongs to $H^s_0$. Similarly, we can impose this condition to solutions of \eqref{eq:PDE}, by considering
\[
\tilde u (t,x) = u\left(t, x+2 c\frac{e^{a(0)t}-1}{a(0)}\right) - ce^{a(0)t},
\]
see \cite{GL-25}*{Section 2.2}. As discussed previously, our schemes also satisfy the zero-mean condition.
\end{rem}

\begin{rem}[Explicit formula on the disk]\label{rem:formula-holomorphic}
The explicit formula on the torus \cite{G-23}*{Theorem 4} is given by
\begin{equation}
\label{eq:explicit-BO-disc}
\Pi v(t, z)=\left\langle\left(I-z e^{i t} e^{2 i t L_{v_0}} S^*\right)^{-1} \Pi v_0, 1\right\rangle, \quad \forall z \in \mathbb{D} = \{z\in \mathbb{C} : |z| <1 \}.
\end{equation}
Nevertheless, as uncovered by \cite{ABCD-25}, for numerical computations, the Fourier variable formulation \eqref{eq:explicitForm} is the most suitable.
\end{rem}

\begin{rem}[Growth of the error constant in time]\label{rem:exp-dep}
While the \eqref{BO} scheme introduced in \cite{ABCD-25} and generalized in \cite{ABL-26} achieves an optimal linear error growth with respect to the final time $T$ due to the exact-in-time evaluation of the explicit formula (see Remark \ref{rem:ABCD-err}), our scheme \eqref{eq:Lie} for the perturbed PDE \eqref{eq:PDE} naturally introduces an exponential dependence on~$T$. This stems from the necessity of introducing a time stepping scheme for solving the equation; as is standard for such approximations, establishing convergence requires combining local error bounds with a stability estimate via a Gronwall argument, which yields the exponential growth factor in $T$.

Nevertheless, we note that on the unbounded domain $\mathbb{R}^d$, uniform-in-time error bounds can be achieved for splitting approximations. This was first demonstrated by Carles--Su \cite{CS22b} for the NLS equation, where the authors elegantly leveraged scattering theory and dispersive decay estimates---analytical tools that fundamentally rely on the properties of an unbounded geometry.
\end{rem}

\subsection{Organization of the paper}

The remainder of this paper is organized as follows. In Section~\ref{sec:strategy-pf}, we outline the overall strategy of the proof, detailing the primary technical challenges and motivating the need for uplifting Birkhoff coordinates. In Section~\ref{sec:birkhoffcoord}, we review the fundamental properties of Birkhoff coordinates.
Sections~\ref{sec:stab-bounds} and \ref{sec:local-error} establish the crucial stability estimates in Birkhoff space and the local error bounds, respectively. These results are combined in Section~\ref{sec:proof_thm_1} to prove the convergence of the splitting schemes (Theorem~\ref{thm:cv-time}). In Section~\ref{sec:comparison-ILW-BO}, we establish the convergence of the scheme for \eqref{ILW} in the deep-water limit (Theorem~\ref{thm:cv-ILW-BO}). Finally, Section~\ref{sec:numerics} presents numerical experiments illustrating the performance and long-time behavior of the scheme.

\section{Strategy of the proof}\label{sec:strategy-pf}

The first-order convergence analysis of time discretizations for nonlinear wave-type equations relies on two key ingredients: a second-order {\it local error}\footnote{Bounding the local error terms requires higher regularity on the solution than the underlying error norm (here $H^s$). This regularity requirement is determined by the choice of the scheme.} 
estimate $\mathcal{O}(\tau^2)$; and a {\it stability bound} for the numerical flow, established in the same error norm. Combining these two bounds via a  Gronwall inequality yields convergence on bounded time intervals $[0, T]$, under the regularity assumptions required by the local error analysis.

For our class of splitting schemes, carrying out this program directly in classical Sobolev spaces or Bourgain spaces fails due to the quasilinear nature of our equations \eqref{eq:PDE}. Instead, we map the discrete dynamics into Birkhoff coordinates via the nonlinear Fourier transform $\Phi: H^s_0 \to \mathfrak{h}^{s+1/2}$ (see Section~\ref{sec:birkhoffcoord} and \cite{GL-25, GK21,GKT-21}). In these coordinates, we obtain the necessary stability bound as detailed next.
\medskip 

A central technical requirement of our proof lies in establishing that over a time step $\tau \in (0,1]$, the flow of the splitting scheme $g \mapsto \varphi^\tau_{\rm BO}e^{\tau A}g$ is locally Lipschitz with a Lipschitz constant of the precise form $1 + c\tau \le e^{c\tau}$. Establishing both the leading constant of exactly~$1$ (rather than $C>1$) and the linear dependence $c\tau$ in time is only possible in Birkhoff coordinates, which is the primary reason why working in these coordinates is indispensable for our analysis. 
Indeed, we extract the linear $\mathcal{O}(\tau)$ dependence in the stability bound for the splitting scheme in Lemma~\ref{lem:stab-split} by integrating the perturbed dynamics in Birkhoff coordinates
and combining it with Lemma~\ref{lem:stab-full}, which proves that for the Benjamin--Ono flow itself, the mapping $v \mapsto \varphi^t_{\rm BO} v$ is locally Lipschitz in $\mathfrak{h}^{s+1/2}$ with constant $1 + \mathcal{O}(t)$.

We emphasize that 
since $n=T/\tau$ time steps are iterated, any prefactor $C>1$ applied at each step would compound exponentially to $C^{T/\tau} \to \infty$ as $\tau \to 0$. Because the Birkhoff map $\Phi$ is nonlinear and not an isometry, mapping this single-step bound back into $H^s$ introduces a (non-unit) constant. Consequently, the entire global stability iteration and induction must be performed on the Birkhoff side.
\medskip

Finally, the local error analysis in Section~\ref{sec:local-error} (Lemma~\ref{eq:loc-err}) connects the exact and numerical flows by writing different Duhamel representations for them, and then comparing them using Lemma~\ref{lem:comp_BO_Eq}, which bounds the difference between the full flow $\Psi^t_{\rm full}$ with $\varphi^t_{\rm BO}$ on the Birkhoff side. 
To close the convergence argument in Section~\ref{sec:proof_thm_1}, we employ an inductive argument in $\mathfrak{h}^{s+1/2}$. This induction establishes that the numerical iterates $w_k = (\varphi_{\rm BO}^\tau e^{\tau A})^ku_0$ remain confined to a bounded ball in $\mathfrak{h}^{s+1/2}$,
ensuring that the stability estimates hold uniformly across all steps $k=1,\dots,n$, without losing constants.
We note that these proof techniques contrast with prior frameworks for proving convergence, which up until now have been focused on solving semilinear PDEs, and we refer to Remark \ref{rem:bourgain-contrast}.
\newline

Lastly, Theorem~\ref{thm:cv-ILW-BO} establishes convergence in the deep-water limit ($\delta \to \infty$) for the ILW scheme by combining our main time-discretization result (Theorem~\ref{thm:cv-time}) with Proposition~\ref{prop:comparison_ILW_BO}, which bounds the difference between the continuous ILW and BO flows on the Birkhoff side with explicit exponential rates $e^{-2\delta}$.

\begin{rem}\label{rem:bourgain-contrast}

For classical time stepping methods (such as splitting schemes or exponential integrators) applied to semilinear dispersive PDEs, the pre-factor $1+O(\tau)$ is obtained using the scheme: the time step factor $\tau$ is extracted directly from the first-order term in front of the non-linear operator in the scheme, where the linear part (i.e., the zeroth-order term) of the flow is unitary. For our perturbed splitting scheme, attempting a Taylor expansion to isolate $\tau$ yields:
\[
u^{n+1} = \varphi^\tau_{\rm BO} u^n + \tau A \varphi_1(\tau A) \varphi^\tau_{\rm BO} u^n, \quad \text{with} \quad \varphi_1(z) = \frac{e^z -1}{z}.
\]
While the second term above is of order $\mathcal{O}(\tau)$, since the flow $\varphi^\tau_{\rm BO}$ is nonlinear and non-unitary in $H^s$, one cannot bound solution differences of the leading term by $1$. Hence, due to the structure of our scheme, we cannot extract a $1+\mathcal{O}(\tau)$ factor directly from the scheme when performing the stability analysis, and must instead extract this factor analytically through the stability bound in Birkhoff coordinates.
\end{rem}

\section{Convergence results}\label{sec:convergenceresults}
\subsection{Birkhoff coordinates and Duhamel's formula}\label{sec:birkhoffcoord}

{\bf Birkhoff coordinates.}
Key to our analysis are the Birkhoff coordinates for the \eqref{BO} equation, led by the mapping $\Phi$ which we define as follows
\begin{equation*}
\Phi:v\in H^s_0\mapsto (\zeta_n(v))_{n\geq 1}\in \mathfrak{h}^{s+\frac 12},
\end{equation*}
with
\begin{equation*}
\mathfrak{h}^{ s+\frac 12}
    = \bigg\{ (z_n)_{n\geq 1}\in \mathbb{C}^\mathbb{N} : \|z_n\|_{\mathfrak{h}^{s+\frac12}}^2 = \sum_{n\geq 1} n^{2s+1}|z_n|^2<+\infty \bigg\}.
\end{equation*}
The advantage of mapping the solution $v$ of the \eqref{BO} equation into these Birkhoff coordinates was demonstrated by \cites{GK21}, where the authors show that for any $n\ge 1$, the Birkhoff coordinates of  the solution $v$ of \eqref{BO} satisfy the differential equation
\begin{equation}
\frac{d}{d t} \zeta_n(v(t)) = i\omega_n(v(t))\zeta_n(v(t)) ,
\label{eq:BO-Birk-eqn}
\end{equation}
where
\begin{align}\label{eq:omega-n}
\omega_n(v) = n^2 - 2\sum_{k\ge 0} \text{min}\{ k,n\}| \zeta_k(v)|^2.
\end{align}
Hence, by letting $\Omega_n(t;v) = \int_0^t \omega_n(v(s))ds$ we have
\[
\zeta_n(v(t)) = e^{i\Omega_n(t;v)}\zeta_n(v_0).
\]

Moreover, the mapping $\Phi$ enjoys several useful properties. In particular, it is continuously differentiable, and we denote its (Fréchet) derivative at $v$ in the direction $f$ by
\[
d_v \Phi[f] \;=\; \left.\frac{d}{ds}\,\Phi(v + s f)\right|_{s=0}.
\]
With this notation in place, we proceed to state the main properties of the Birkhoff map, which can be found in \cite{GKT-21}.

\begin{prop}\label{t:birkhoff-bdd}

Let $s\ge 0$. The Birkhoff map sends bounded subsets of $H^s_0$ to bounded subsets of $\mathfrak{h}^{s+\frac12}$, and similarly for the inverse map.

Moreover, for every $v \in H^s_0$, there is $C>0$ and a neighborhood $V$ of $v$ in $H^s_0$ such that for every $\tilde{v}\in V$ and $f\in H^s$,
\begin{align}
\| d_v \Phi [f] \|_{\mathfrak{h}^{s+\frac12}} &\leq C \| f \|_{H^s} ,
\label{d-1} \\
\| d_v\Phi[f] - d_{\tilde{v}}\Phi[f]  \|_{\mathfrak{h}^{s+\frac12}} &\leq C\| v - \tilde{v} \|_{H^s} \| f \|_{H^s}.
\label{d-2}
\end{align}
Similar estimates hold for the inverse map $\Phi^{-1}$. 
\end{prop}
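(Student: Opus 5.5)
This proposition is quoted from \cite{GKT-21} (and, for $s\ge 0$, from \cite{GK21}). I would not redo the construction of $\Phi$. Instead I would assemble the proof from three structural facts established there: (i) $\Phi: H^s_0\to\mathfrak{h}^{s+\frac12}$ is a homeomorphism; (ii) $\Phi$ extends to a real-analytic map on a complex neighborhood $U$ of $H^s_0$ in its complexification $H^s_{0,\mathbb{C}}$, with values in $\mathfrak{h}^{s+\frac12}\otimes\mathbb{C}$; (iii) the differential $d_0\Phi$ at the origin is, up to the normalization $\zeta_n \approx \widehat{v}(n)/\sqrt{n}$, the weighted Fourier transform, hence a linear isomorphism $H^s_0\to\mathfrak{h}^{s+\frac12}$. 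By the inverse function theorem, (iii) makes $\Phi$ a local real-analytic diffeomorphism near $0$. Analyticity together with bijectivity and the nondegeneracy of $d_v\Phi$ at every $v$ (also shown in \cite{GKT-21}, via the Lax operator $L_v$) then makes $\Phi$ a global real-analytic diffeomorphism, and in particular $\Phi^{-1}$ is real-analytic.

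For the boundedness statement, I would use the trace formulas. Since $\|v\|_{L^2}^2 = 2\sum_n n|\zeta_n|^2$ and, more generally, the conserved Hamiltonians $\mathcal{H}_k(v)$ are polynomial functions of $(n|\zeta_n|^2)_n$ controlling $\|v\|_{H^{k/2}}$, the map $\Phi$ sends bounded sets to bounded sets when $s$ is a half-integer. The general $s\ge0$ would require the finer estimates of \cite{GKT-21}, namely the characterization of $\mathfrak{h}^{s+\frac12}$-norms through the spectral data of $L_v$ (eigenvalues $\lambda_n$ and the gaps $\gamma_n = \lambda_n-\lambda_{n-1}-1 = |\zeta_n|^2$) combined with interpolation. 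The same argument runs in reverse for $\Phi^{-1}$. I expect this to be the main obstacle, since bounded sets are not compact in infinite dimensions and analyticity alone does not give boundedness; I would rely directly on \cite{GKT-21} here rather than reprove it.

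The derivative estimates are then immediate from local analyticity. Fix $v$ and choose $r>0$ so that the complex ball $B_{\mathbb{C}}(v,2r)\subset U$ and $\Phi$ is bounded by some $M$ there. Cauchy estimates give $\|d_{\tilde v}\Phi\|_{\mathcal{L}(H^s,\mathfrak{h}^{s+\frac12})}\le M/r$ for every $\tilde v\in V:=B(v,r)$, which yields \eqref{d-1}. Applying the Cauchy estimate to the second derivative gives $\|d^2_{w}\Phi\|\le 2M/r^2$ on $V$, and the mean value inequality along the segment from $v$ to $\tilde v$ (which lies in the convex set $V$) gives
\begin{equation*}
\|d_v\Phi[f]-d_{\tilde v}\Phi[f]\|_{\mathfrak{h}^{s+\frac12}}\le \sup_{w\in[v,\tilde v]}\|d^2_w\Phi\|\,\|v-\tilde v\|_{H^s}\|f\|_{H^s},
\end{equation*}
which is \eqref{d-2}. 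Taking $C=\max(M/r,2M/r^2)$ completes this part. The identical argument applied to the analytic inverse $\Phi^{-1}$ on a neighborhood in $\mathfrak{h}^{s+\frac12}$ gives the corresponding estimates for $\Phi^{-1}$.
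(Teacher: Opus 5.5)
Your proposal is correct and agrees with the paper, which gives no independent proof and simply quotes these properties from \cite{GKT-21}. You rely on the same reference for real analyticity and for the global boundedness, and your Cauchy-estimate derivation of \eqref{d-1}--\eqref{d-2} (and of their analogues for $\Phi^{-1}$) is a valid, standard way to make the local derivative bounds explicit.
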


\noindent Thanks to \eqref{d-1}, for any ball $B$ of $H^s_0$ there exist two positive constants $c, C$ such that for any $f \in B$ we have
\[
c\|\Phi (f) \|_{\mathfrak{h}^{s+\frac12}}\leq \|f\|_{H^s}\leq C \|\Phi (f) \|_{\mathfrak{h}^{s+\frac12}}.
\]

Next, after mapping into Birkhoff coordinates, we present Duhamel’s formula for \eqref{eq:PDE}, as introduced in \cite{GL-25}*{Section 2.3}.
\newline

\noindent
{\bf Duhamel's formula for \eqref{eq:PDE}.} Given $s\ge 0$, take $u$ to be the solution to \eqref{eq:PDE} in $H^s_0$. By using the formula \eqref{eq:BO-Birk-eqn} for solving the \eqref{BO} equation together with an application of the chain rule we have
\begin{equation*}
\frac{d}{d t} \zeta_n(u(t)) = d_{u(t)}\zeta_n\,[\partial_t u(t)] =  i\omega_n(u(t)) \zeta_n(u(t)) + d_{u(t)}\zeta_n\,[ Au(t)] .
\end{equation*}
and hence
\begin{equation*}
\frac{d}{d t} ( e^{-i\Omega_n}  \zeta_n ) = e^{-i\Omega_n} d_{u(t)}\zeta_n [Au].
\end{equation*}
Thus, when mapped in Birkhoff coordinates, $u$ satisfies the following Duhamel formula 
\begin{equation}
\zeta_n(u(t)) = e^{i\Omega_n(t;u)}\zeta_n(u_0) + \int_0^t e^{i[\Omega_n(t;u) - \Omega_n(r;u)]}d_{u(r)} \zeta_n[A u(r)]dr.
\label{eq:duh}
\end{equation}

\subsection{Stability bounds}\label{sec:stab-bounds}
\begin{lem}[Stability of the Benjamin--Ono flow]\label{lem:stab-full} Let $M>0$ and $s\ge 0$. Let $f$ and $g$ in  $H^s_{0}$ be such that $\| f \|_{H^s},\| g \|_{H^s} \leq M$. There exists $c_1 >0$ that continuously depends on $M$ such that
\[
\| \Phi(\varphi^{t}_{\rm BO} f) - \Phi(\varphi^{t}_{\rm BO} g)\|_{\mathfrak{h}^{s+\frac{1}{2}}} \le e^{c_1 t}\|\Phi(f) - \Phi(g) \|_{\mathfrak{h}^{s+\frac{1}{2}}},
\]
for all $t\ge 0$.
\end{lem}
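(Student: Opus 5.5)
In Birkhoff coordinates the Benjamin--Ono flow acts by phase rotation. By \eqref{eq:BO-Birk-eqn}--\eqref{eq:omega-n}, the frequencies $\omega_n$ depend only on the actions $|\zeta_k|^2$, and these are conserved along the flow. Hence $\omega_n(\varphi^t_{\rm BO} f)=\omega_n(f)$ is constant in time, and writing $\zeta=\Phi(f)$, $\xi=\Phi(g)$ we get the exact formula
\[
\zeta_n(\varphi^t_{\rm BO} f)-\zeta_n(\varphi^t_{\rm BO} g) = e^{it\omega_n(f)}\bigl(\zeta_n-\xi_n\bigr) + \bigl(e^{it\omega_n(f)}-e^{it\omega_n(g)}\bigr)\xi_n .
\]
The first term has exactly the same $\mathfrak{h}^{s+\frac12}$ norm as $\Phi(f)-\Phi(g)$, which produces the leading constant $1$. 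The whole task is therefore to bound the second term by $c\,t\,\|\zeta-\xi\|_{\mathfrak{h}^{s+\frac12}}$.

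For the second term I use $|e^{ia}-e^{ib}|\le |a-b|$, which gives the bound $t\,|\omega_n(f)-\omega_n(g)|\,|\xi_n|$. The $n^2$ parts cancel, leaving
\[
|\omega_n(f)-\omega_n(g)| \le 2\sum_{k\ge1}\min\{k,n\}\,\bigl||\zeta_k|^2-|\xi_k|^2\bigr| \le 2\sum_{k\ge1} k\,|\zeta_k-\xi_k|\,(|\zeta_k|+|\xi_k|).
\]
By Cauchy--Schwarz with weight $k$, this is at most $2\|\zeta-\xi\|_{\mathfrak{h}^{1/2}}\bigl(\|\zeta\|_{\mathfrak{h}^{1/2}}+\|\xi\|_{\mathfrak{h}^{1/2}}\bigr)$, uniformly in $n$. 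Since $s\ge0$, the $\mathfrak{h}^{1/2}$ norms are dominated by the $\mathfrak{h}^{s+\frac12}$ norms. Proposition~\ref{t:birkhoff-bdd} sends the $H^s$ ball of radius $M$ into an $\mathfrak{h}^{s+\frac12}$ ball of radius $R(M)$, so
\[
\Bigl\|\bigl(e^{it\omega_n(f)}-e^{it\omega_n(g)}\bigr)\xi_n\Bigr\|_{\mathfrak{h}^{s+\frac12}} \le 4tR(M)\,\|\xi\|_{\mathfrak{h}^{s+\frac12}}\,\|\zeta-\xi\|_{\mathfrak{h}^{s+\frac12}}\le 4tR(M)^2\|\zeta-\xi\|_{\mathfrak{h}^{s+\frac12}}.
\]

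Putting the two terms together gives the bound $(1+c_1t)\|\Phi(f)-\Phi(g)\|_{\mathfrak{h}^{s+\frac12}}\le e^{c_1t}\|\Phi(f)-\Phi(g)\|_{\mathfrak{h}^{s+\frac12}}$ with $c_1=4R(M)^2$, valid for all $t\ge0$. The continuous dependence of $c_1$ on $M$ follows by taking $R$ to be a continuous nondecreasing majorant of the Birkhoff image radius.

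The only delicate point is the very first step. It must be justified that $\omega_n$ is time-independent along the BO flow, i.e.\ that $|\zeta_k(\varphi^t_{\rm BO} f)|$ is conserved. This holds because the equation for $\zeta_k$ in \eqref{eq:BO-Birk-eqn} is a phase rotation with real $\omega_k$, so $\frac{d}{dt}|\zeta_k|^2=0$. The identity $\Omega_n(t;f)=t\,\omega_n(f)$ then follows, so no Gronwall argument is needed.
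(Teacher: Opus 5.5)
Your proof is correct, and it takes a more direct route than the paper.

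\textbf{The paper's route.} The paper keeps the time-dependent phases $\Omega_n(t;v)=\int_0^t\omega_n(v(r))\,dr$. It bounds $|\omega_n(v(r))-\omega_n(\tilde v(r))|$ by the running difference $\|\Phi(v(r))-\Phi(\tilde v(r))\|_{\mathfrak{h}^{1/2}}$ and arrives at the integral inequality $\|\Phi(v(t))-\Phi(\tilde v(t))\|_{\mathfrak{h}^{s+\frac12}}\le \|\Phi(f)-\Phi(g)\|_{\mathfrak{h}^{s+\frac12}}+c_1\int_0^t\|\Phi(v(r))-\Phi(\tilde v(r))\|_{\mathfrak{h}^{s+\frac12}}\,dr$. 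It then closes with Gronwall.

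\textbf{Your route.} You use conservation of the actions $|\zeta_k|^2$. Then $\omega_n$ is constant along each trajectory and $\Omega_n(t;f)=t\,\omega_n(f)$. The frequency mismatch is controlled by the initial difference alone. This gives the sharper explicit factor $1+c_1t$ with $c_1=4R(M)^2$, with no Gronwall step.

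\textbf{What each buys.} Your version makes explicit something the paper uses only tacitly. The paper's constant in the bound on $\sum_k 2k\,\bigl||\zeta_k(v(r))|^2-|\zeta_k(\tilde v(r))|^2\bigr|$ must be uniform for all $t\ge0$. That requires $\|\Phi(v(r))\|_{\mathfrak{h}^{s+\frac12}}$ to stay bounded for all $r$, which is exactly conservation of the actions. The paper's Gronwall formulation, in turn, is the same template it reuses in Lemma~\ref{lem:comp_BO_Eq}. There one trajectory is the perturbed flow $\Psi^t_{\rm full}$, whose actions are not conserved, so the frequencies genuinely vary in time. For the pure BO flow considered here, your argument is the cleaner one.

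\textbf{A small point.} For rough data it is slightly cleaner to read off $|\zeta_k(\varphi^t_{\rm BO}f)|=|\zeta_k(f)|$ from the integrated form $\zeta_n(v(t))=e^{i\Omega_n(t;v)}\zeta_n(v_0)$ with real $\Omega_n$. Differentiating $|\zeta_k|^2$ in $t$, as you do, needs extra justification at low regularity.
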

\begin{proof}[Proof of Lemma \ref{lem:stab-full}]%
Let $v(t) = \varphi^{t}_{\rm BO} f$ and $\tilde v(t) = \varphi^{t}_{\rm BO} g$ and $n \in \mathbb{N}$. We have
\[
\zeta_n(v(t)) = e^{i\Omega_n(t; v)}\zeta_n(f), \quad \zeta_n(\tilde v(t)) = e^{i\Omega_n(t; \tilde v)} \zeta_n(g)
\]
and using \eqref{eq:omega-n}
\begin{align*}
|e^{i\Omega_n(t;v)}-e^{i\Omega_n(t;\tilde v)}| &\le \int_0^t |\omega_n(r;v) - \omega_n(r;\tilde v)|dr\\
& \le \int_0^t \left| \sum_{k\ge 1}2k \left| |\zeta_k(v(r))|^2 - |\zeta_k(\tilde v(r))|^2 \right| \right| dr\\
&\le C\int_0^t \| \Phi(v(r)) - \Phi(\tilde v(r))\|_{\mathfrak{h}^{\frac12}}dr
\end{align*}
Hence,
\begin{equation}\label{eq:diff-zeta-bd}
\begin{aligned}
\|\Phi(v(t)) -  \Phi(\tilde  v(t))\|_{\mathfrak{h}^{s+\frac12}} & \le \sup_{n\ge 1}|e^{i\Omega_n(t; v)} - e^{i\Omega_n(t;\tilde  v)} | \| \zeta_n(f)\|_{\mathfrak{h}^{s+\frac12}} + \|\zeta_n(f) - \zeta_n(g)\|_{\mathfrak{h}^{s+\frac12}}\\
& \le c_1 \int_0^t \| \Phi(v(r)) - \Phi(\tilde v(r))\|_{\mathfrak{h}^{s+\frac12}}dr + \|\Phi(f) - \Phi(g) \|_{ \mathfrak{h}^{s+\frac12}},
\end{aligned}
\end{equation}
where  the constant $c_1$ depends  only on $\| f\|_{H^s}$ and $\| g\|_{H^s}$. We obtain the desired result by applying a Gronwall argument to the above.
\end{proof}
\begin{lem}[Stability of the splitting]\label{lem:stab-split} Let $M>0$, $s\ge 0$ and let $A$ satisfy \eqref{cond_a}. Let  $f$ and $g$ in  $H^s_{0}$ be such that $\| f \|_{H^s},\| g \|_{H^s} \leq M$. Then there exists $c>0$ that continuously depends on $M$ and $\| a \|_{\ell^{\infty}}$ such that
\[
\|\Phi\left(\varphi^{\tau}_{\rm BO}e^{\tau A}f\right) - \Phi\left(\varphi^{\tau}_{\rm BO}e^{\tau A}g\right)\|_{\mathfrak{h}^{s+ \frac12}} \le e^{c \tau}\| \Phi(f)-\Phi(g)\|_{\mathfrak{h}^{s+\frac12}},
\]
for any $\tau \in (0,1]$.
\end{lem}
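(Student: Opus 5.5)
The plan is to factor the splitting step as $g\mapsto e^{\tau A}g$ followed by $\varphi^\tau_{\rm BO}$ and bound each factor on the Birkhoff side. By Lemma~\ref{lem:stab-full}, applied to $e^{\tau A}f$ and $e^{\tau A}g$, we get
\[
\|\Phi(\varphi^{\tau}_{\rm BO}e^{\tau A}f) - \Phi(\varphi^{\tau}_{\rm BO}e^{\tau A}g)\|_{\mathfrak{h}^{s+\frac12}} \le e^{c_1\tau}\|\Phi(e^{\tau A}f)-\Phi(e^{\tau A}g)\|_{\mathfrak{h}^{s+\frac12}}.
\]
This is legitimate because $\|e^{\tau A}f\|_{H^s}\le e^{\tau\|a\|_{\ell^\infty}}\|f\|_{H^s}\le e^{\|a\|_{\ell^\infty}}M$ for $\tau\le1$. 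Hence $c_1$ depends only on $M$ and $\|a\|_{\ell^\infty}$. Note also that $e^{\tau A}$ preserves zero mean and real-valuedness, by \eqref{cond_a}. It therefore remains to prove that the linear flow satisfies
\[
\|\Phi(e^{\tau A}f)-\Phi(e^{\tau A}g)\|_{\mathfrak{h}^{s+\frac12}}\le e^{c_2\tau}\|\Phi(f)-\Phi(g)\|_{\mathfrak{h}^{s+\frac12}},
\]
with leading constant exactly $1$. Then $c=c_1+c_2$.

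For this linear step I would work in Birkhoff coordinates, in the same spirit as the Duhamel formula \eqref{eq:duh}. Set $z(t)=e^{tA}f$ and $\tilde z(t)=e^{tA}g$. The chain rule gives
\[
\frac{d}{dt}\Phi(z(t))=d_{z(t)}\Phi[Az(t)],
\]
so
\[
\Phi(z(\tau))-\Phi(\tilde z(\tau))=\Phi(f)-\Phi(g)+\int_0^\tau \bigl(d_{z(r)}\Phi[Az(r)]-d_{\tilde z(r)}\Phi[A\tilde z(r)]\bigr)\,dr .
\]
I would split the integrand as
\[
d_{z}\Phi[A(z-\tilde z)]+\bigl(d_{z}\Phi-d_{\tilde z}\Phi\bigr)[A\tilde z].
\]
For the first term use \eqref{d-1} together with $\|A\|_{H^s\to H^s}\le\|a\|_{\ell^\infty}$. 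For the second term use \eqref{d-2}. Both terms are bounded by $C\|a\|_{\ell^\infty}\|z(r)-\tilde z(r)\|_{H^s}$, with $C$ depending on $M$ and $\|a\|_{\ell^\infty}$.

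The remaining point is to pass back from the $H^s$ distance to the $\mathfrak{h}^{s+1/2}$ distance, using the Lipschitz bound on $\Phi^{-1}$ on bounded sets (the analogue of \eqref{d-1} for the inverse map). This gives
\[
\|z(r)-\tilde z(r)\|_{H^s}\le C\|\Phi(z(r))-\Phi(\tilde z(r))\|_{\mathfrak{h}^{s+\frac12}}.
\]
Here any constant is harmless, since it multiplies an $O(\tau)$ integral rather than the leading term. Gronwall's lemma then yields the factor $e^{c_2\tau}$.

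The main obstacle is that Proposition~\ref{t:birkhoff-bdd} only provides the derivative bounds \eqref{d-1} and \eqref{d-2} locally, on a neighbourhood of each point. To obtain uniform constants on the ball of radius $e^{\|a\|_{\ell^\infty}}M$ in $H^s_0$, I would argue as follows. The trajectories $\{z(r)\},\{\tilde z(r)\}$ for $r\in[0,1]$ are continuous images of a compact interval, and the segments joining them are compact. So one can cover them with finitely many of the neighbourhoods $V$ and take the maximum of the corresponding constants. This yields constants that depend on $f$ and $g$ rather than only on $M$. If dependence on $M$ alone is required, I would instead invoke the uniform bounds of \cite{GKT-21,GL-25} on bounded sets, where $\Phi$ is real-analytic with derivatives bounded on bounded subsets. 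The Lipschitz bound on $\Phi^{-1}$ is also needed to transfer the $\mathfrak{h}^{s+1/2}$ distance to $H^s$; it follows in the same way from the analogue of \eqref{d-1} for $\Phi^{-1}$, using convexity of balls in $\mathfrak{h}^{s+1/2}$.
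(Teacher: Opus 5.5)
Your proposal is correct and follows essentially the same route as the paper. You integrate the chain rule for $\Phi(e^{tA}f)$ in Birkhoff coordinates, split the integrand using \eqref{d-1}--\eqref{d-2}, return to the Birkhoff distance through the Lipschitz bound for $\Phi^{-1}$, and apply Gronwall to obtain a factor $e^{c_2\tau}$ with leading constant exactly $1$; you then compose with Lemma~\ref{lem:stab-full} on the $H^s$-ball of radius $e^{\|a\|_{\ell^\infty}}M$. You also note that Proposition~\ref{t:birkhoff-bdd} gives these derivative bounds only locally, so constants depending on $M$ alone need uniform bounds on bounded sets; the paper passes over this when it simply asserts that $C$ depends on $M$.
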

\begin{proof}[Proof of Lemma \ref{lem:stab-split}]
Let $w(t) = e^{tA}f$ and $z(t) = e^{tA}g$. We have that $\partial_t w = A w$ and hence $w$ satisfies
\[
\frac{d}{dt}\zeta_n(w(t)) = \text{d}_{w(t)} \zeta_n [A w] \quad \text{and} \quad \zeta_n(w(t)) = \zeta_n(f) + \int_0^t \text{d}_{w(r)}\zeta_n[Aw(r)]dr.
\]
Similarly, $\zeta_n(z(t)) = \zeta_n(g) + \int_0^t \text{d}_{z(r)}\zeta_n[Az(r)]dr$. Thus by taking $t=\tau \le 1$ we have
\[
\| \Phi(w(\tau)) - \Phi(z(\tau))\|_{{\mathfrak{h}^{s+\frac12}}} \le \| \Phi(f) - \Phi(g)\|_{{\mathfrak{h}^{s+\frac12}}} + \int_0^{\tau} dr \|\text{d}_{w(r)} \Phi [A w(r)] - \text{d}_{z(r)} \Phi[Az(r)] \|_{{\mathfrak{h}^{s+\frac12}}},
\]
where by using \eqref{d-1} and \eqref{d-2} we have
\begin{align*}
\|\text{d}_{w(r)} \Phi [A w(r)] - \text{d}_{z(r)} \Phi[Az(r)] \|_{\mathfrak{h}^{s+\frac{1}{2}} }
&\le \| \text{d}_{w(r)}\Phi[Aw(r)] - \text{d}_{z(r)} \Phi[Aw(r)]\|_{\mathfrak{h}^{s+\frac{1}{2}}} \\
&\qquad +\| \text{d}_{z(r)} \Phi[Aw(r)] - \text{d}_{z(r)} \Phi[Az(r)] \|_{\mathfrak{h}^{s+\frac{1}{2}}}\\
&\le C(\|w(r) - z(r)\|_{H^s}\| Aw(r)\|_{H^s}  + \| A(w(r) - z(r))\|_{H^s})\\
&\le C (1+\|a\|_{\ell^{\infty}}) \|\Phi(w(r)) - \Phi(z(r))\|_{\mathfrak{h}^{s+ \frac{1}{2}}}
\end{align*}
with $C>0$ depending on $M$. 
Hence, by applying a Gronwall argument it follows that 
\[
\| \Phi(w(\tau)) - \Phi(z(\tau))\|_{\mathfrak{h}^{s+1/2}} \le e^{C(1+\|a\|_{\ell^{\infty}})\tau}\| \Phi(f) - \Phi(g)\|_{\mathfrak{h}^{s+1/2}}.
\]
Finally, by applying Lemma \ref{lem:stab-full} there exists $c_1>0$ that only depends on $Me^{\sup \Re(a)}$ such that
\[
\|\Phi(\varphi^{\tau}_{BO} w(\tau)) -  \Phi(\varphi^{\tau}_{BO} z(\tau))\|_{\mathfrak{h}^{s+\frac12}} \le e^{(c_1+C(1+\|a\|_{\ell^{\infty}}))\tau} \| \Phi(f) - \Phi(g)\|_{\mathfrak{h}^{s+\frac12}},
\]
which concludes the proof.
\end{proof}

\begin{rem}[Condition on $\tau$]
We need to bound terms of the form $e^{\tau \| a\|_{\ell^{\infty}}}$. Thus we merely needed $\tau$ to be bounded above by a constant. For simplicity we choose $\tau\in (0,1]$.
\end{rem}

\subsection{Local error analysis}\label{sec:local-error}

\begin{lem}[Local error bound]\label{eq:loc-err}
Let $s\ge 0$, $T>0$, $u_0\in H^{s+1}_0$ and $A$ satisfy \eqref{cond_a}. Let $u(t) = \Psi^t_{\rm{full}}u_0$ be the unique solution of \eqref{eq:PDE} satisfying $\sup_{[0,T]} \| u(t)\|_{H^{s+1}}\le M$, for some $M>0$.
Then there exists $C>0$ that continuously depends on $M$ and $\sup \Re(a)$ such that for any $\tau \in (0,1]$
\[
\| \Psi_{\rm full}^{\tau}(u_0) - \varphi^{\tau}_{\rm BO}e^{\tau A} u_0\|_{H^s} \le C \| a \|_{\ell^{\infty}} \tau^2.
\]
\end{lem}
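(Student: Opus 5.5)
We work on the Birkhoff side and transfer back to $H^s$ at the end. Since $\|\Psi^t_{\rm full}u_0\|_{H^{s+1}}\le M$ on $[0,T]$, and since $e^{tA}$ and $\varphi^t_{\rm BO}$ are bounded on bounded sets of $H^{s+1}_0$ for $t\in[0,1]$, both $u(t)$ and $v(t):=\varphi^t_{\rm BO}e^{\tau A}u_0$ stay in a fixed ball of $H^{s+1}_0$. The bound for $v$ follows from Proposition \ref{t:birkhoff-bdd} and the fact that $|\zeta_n|$ is conserved by \eqref{BO}, with a constant depending on $Me^{\sup\Re(a)}$. By the inverse-map estimates of Proposition \ref{t:birkhoff-bdd} on this ball, it then suffices to prove
\[
\|\Phi(u(\tau))-\Phi(v(\tau))\|_{\mathfrak{h}^{s+\frac12}}\le C\|a\|_{\ell^\infty}\tau^2 .
\]

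\textbf{Step 1: Duhamel representations on the Birkhoff side.} For the exact solution we use \eqref{eq:duh}:
\[
\zeta_n(u(\tau))=e^{i\Omega_n(\tau;u)}\zeta_n(u_0)+\int_0^\tau e^{i[\Omega_n(\tau;u)-\Omega_n(r;u)]}d_{u(r)}\zeta_n[Au(r)]\,dr .
\]
For the scheme, \eqref{eq:BO-Birk-eqn} gives $\zeta_n(v(\tau))=e^{i\Omega_n(\tau;v)}\zeta_n(e^{\tau A}u_0)$. The computation in the proof of Lemma \ref{lem:stab-split} gives
\[
\zeta_n(e^{\tau A}u_0)=\zeta_n(u_0)+\int_0^\tau d_{w(r)}\zeta_n[Aw(r)]\,dr, \qquad w(r)=e^{rA}u_0 .
\]
Subtracting, the difference splits into three terms:
\begin{align*}
\mathrm{I}_n&=\bigl(e^{i\Omega_n(\tau;u)}-e^{i\Omega_n(\tau;v)}\bigr)\zeta_n(u_0),\\
\mathrm{II}_n&=\int_0^\tau \Bigl(e^{i[\Omega_n(\tau;u)-\Omega_n(r;u)]}d_{u(r)}\zeta_n[Au(r)]-e^{i\Omega_n(\tau;v)}d_{w(r)}\zeta_n[Aw(r)]\Bigr)dr .
\end{align*}
The third is $\mathrm{III}_n=0$, because the leading terms share $\zeta_n(u_0)$.

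\textbf{Step 2: term $\mathrm{II}$.} Each integrand is bounded in $\mathfrak{h}^{s+1/2}$ by \eqref{d-1}, so it suffices to show the integrand difference is $O(\|a\|_{\ell^\infty}\tau)$. We split it into three pieces.
\begin{itemize}
\item The phase difference is $|e^{i[\Omega_n(\tau;u)-\Omega_n(r;u)]}-e^{i\Omega_n(\tau;v)}|\le|\Omega_n(r;u)|+|\Omega_n(\tau;u)-\Omega_n(\tau;v)|$. The factor $n^2 r$ in $|\Omega_n(r;u)|$ is the dangerous one: it costs two derivatives. We absorb it by estimating $\mathrm{II}$ in $\mathfrak{h}^{s+1/2}$ using $d\Phi$ bounded from $H^{s+1}$ to $\mathfrak{h}^{s+3/2}$, which gains one factor $n$.
\item The difference of derivatives, $d_{u(r)}\Phi[Au(r)]-d_{w(r)}\Phi[Aw(r)]$, is controlled by \eqref{d-2} together with $\|u(r)-w(r)\|_{H^s}\lesssim r$.
\end{itemize}
This last bound holds because both $\partial_t u$ and $\partial_t w$ are bounded in $H^s$ given $u\in H^{s+1}$. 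Note $\|\partial_x(u^2)\|_{H^s}$ is fine, but $\partial_x|D|u$ needs $H^{s+2}$. This forces us to use $\mathfrak{h}^{s-1/2}$-type Lipschitz bounds, i.e.\ \eqref{d-2} at regularity $s-1$, or better to compare $\Phi(u(r))$ with $\Phi(w(r))$ directly via the Birkhoff dynamics, where the $n^2$ rotation is explicit.

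\textbf{Step 3: term $\mathrm{I}$ and the main obstacle.} For $\mathrm{I}$, we bound $|\Omega_n(\tau;u)-\Omega_n(\tau;v)|\le 2\int_0^\tau\sum_k\min(k,n)\bigl||\zeta_k(u)|^2-|\zeta_k(v)|^2\bigr|$. Since $|\zeta_k|$ is invariant under \eqref{BO}, $|\zeta_k(u(r))|^2-|\zeta_k(v(r))|^2$ equals the change due to $A$ only, which is $O(\|a\|_{\ell^\infty}\tau)$ in $\mathfrak{h}^{1/2}$ by the Duhamel formula. Hence $\mathrm{I}=O(\|a\|_{\ell^\infty}\tau^2)$ with no derivative loss.

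The main obstacle is the $n^2 r$ phase factor in $\mathrm{II}$. The derivatives in $d_{u(r)}\zeta_n[Au(r)]$ satisfy $\sum n^{2s+3}|\cdot|^2<\infty$ only because $u\in H^{s+1}$. So $n^2r$ can be absorbed only to the extent of one power of $n$ gained from the extra regularity. I would first try to write both integrands with the free phase $e^{in^2(\tau-r)}$ factored out and compare them at fixed $r$. The residual $e^{in^2(\tau-r)}-1$ then appears only against the $A$-contribution, which is already multiplied by $\|a\|_{\ell^\infty}$, and we would gain one $n$ via $\mathfrak{h}^{s+3/2}$ and interpolate. If this fails, fallback: reorganize the scheme's Duhamel formula to propagate $d_{w(r)}\zeta_n$ by BO phases, i.e.\ use $e^{tA}$ and $\varphi_{\rm BO}$ commuting at the linear level $e^{\tau\partial_x|D|}$, so that the phase mismatch involves only the nonlinear part of $\omega_n$, which is $O(n)$.
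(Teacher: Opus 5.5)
\textbf{There is a genuine gap, in term $\mathrm{II}$.} Your treatment of term $\mathrm{I}$ is fine: invariance of $|\zeta_k|$ under \eqref{BO} does give $|\Omega_n(\tau;u)-\Omega_n(\tau;v)|\lesssim\|a\|_{\ell^\infty}\tau^2$ uniformly in $n$. Term $\mathrm{II}$ does not close, and the reason is the decomposition you use in Step 2.

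After factoring out $e^{i\Omega_n(\tau;u)}$, you need
\[
e^{-i\Omega_n(r;u)}d_{u(r)}\zeta_n[Au(r)]-d_{w(r)}\zeta_n[Aw(r)]=O(\tau)\quad\text{in }\mathfrak{h}^{s+\frac12}.
\]
Look at the linear model, $\zeta_n\approx\hat u(n)$ and $\Omega_n(r)\approx n^2r$. There
\[
d_{u(r)}\zeta_n[Au(r)]\approx e^{in^2r}e^{ra(n)}a(n)\hat u_0(n),\qquad d_{w(r)}\zeta_n[Aw(r)]\approx e^{ra(n)}a(n)\hat u_0(n),
\]
so the required quantity vanishes exactly. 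However, your phase-difference piece and your derivative-difference piece are each of size comparable to $\min(1,n^2r)\,|a(n)\hat u_0(n)|$, and they cancel only against each other. Estimating them separately therefore costs two derivatives. The single extra derivative from $u_0\in H^{s+1}$, combined with $\min(2,n^2r)\le 2nr^{1/2}$, gives at best $O(\|a\|_{\ell^\infty}\tau^{3/2})$. For the same reason, $\|u(r)-w(r)\|_{H^s}\lesssim r$ fails for $H^{s+1}$ data, as you note yourself.

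The cancellation you need is the commutation $Ae^{r\partial_x|D|}=e^{r\partial_x|D|}A$. This holds in physical space but has no simple form in Birkhoff coordinates, where $A$ becomes the nonlinear, non-diagonal map $\zeta\mapsto d\Phi[A\Phi^{-1}\zeta]$. Proposition \ref{t:birkhoff-bdd} alone cannot detect it, and your fallback does not say how to recover it there.

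\textbf{How the paper uses the commutation.} It works directly in physical space. Write Duhamel's formula around $e^{t\partial_x|D|}$ for $v(t)=\varphi^t_{\rm BO}(e^{\tau A}u_0)$, and around $e^{t(\partial_x|D|+A)}$ for $u$. Since $e^{\tau(\partial_x|D|+A)}u_0=e^{\tau\partial_x|D|}e^{\tau A}u_0$, the free parts coincide at $t=\tau$, leaving
\[
u(\tau)-v(\tau)=-\int_0^\tau e^{(\tau-r)\partial_x|D|}\bigl(e^{(\tau-r)A}\partial_x u^2(r)-\partial_x v^2(r)\bigr)\,dr .
\]
The integrand is bounded in $H^s$ by $C\|u(r)-v(r)\|_{H^{s+1}}+C\|a\|_{\ell^\infty}\tau$. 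The derivative lost on $\partial_x(u^2-v^2)$ is paid for by the order-one bound
\[
\sup_{r\le\tau}\|u(r)-v(r)\|_{H^{s+1}}\le C\|a\|_{\ell^\infty}\tau,
\]
which is Lemma \ref{lem:tau-bd} applied at regularity $s+1$. Birkhoff coordinates enter only through that bound, via Lemma \ref{lem:comp_BO_Eq}. There the $n^2$ phases cancel because $u$ and $v$ are compared at the \emph{same} time $r$. In your term $\mathrm{II}$, by contrast, a BO-rotated state is compared with an unrotated one.
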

Before providing the proof of Lemma \ref{eq:loc-err}, we establish the following auxiliary results. To obtain this lemma we will again use Birkhoff coordinates, however we are allowed to loose a constant $C$ and hence can obtain the result in the usual Sobolev norm, unlike for the stability bounds.

\begin{lem}[Comparison of the \eqref{BO} flow and the flow of \eqref{eq:PDE}]\label{lem:comp_BO_Eq} Let $s \geq 0$, $T>0$, and $u_0$, $v_0$ belong to $H^s_{0}$ and let $A$ satisfy \eqref{cond_a}. Assume that for all $t\in [0,T]$, $\| \Psi_{\rm full}^{t}u_0 \|_{H^s}, \| \varphi^{t}_{\rm BO} v_0 \|_{H^s} \leq M$, for some $M$. Then there exists $C>0$ that continuously depends on $M$ such that for any $t \in [0,T]$
\[
\|\Phi\left(\Psi_{\rm full}^{t}u_0\right) - \Phi\left(\varphi^{t}_{\rm BO} v_0\right)\|_{\mathfrak{h}^{s+\frac12}} \le C e^{C t} \left( \| u_0 - v_0 \|_{H^s} + T \| a \|_{\ell^{\infty}} \right).
\]
\end{lem}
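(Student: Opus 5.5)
We argue entirely in Birkhoff coordinates, comparing the Duhamel formula \eqref{eq:duh} for $u(t)=\Psi^t_{\rm full}u_0$ with the exact phase-rotation formula for $v(t)=\varphi^t_{\rm BO}v_0$, and close with a Gronwall argument as in the proof of Lemma~\ref{lem:stab-full}. For each $n\ge1$,
\[
\zeta_n(u(t))-\zeta_n(v(t)) = e^{i\Omega_n(t;u)}\zeta_n(u_0)-e^{i\Omega_n(t;v)}\zeta_n(v_0) + \int_0^t e^{i[\Omega_n(t;u)-\Omega_n(r;u)]}d_{u(r)}\zeta_n[Au(r)]\,dr .
\]
We split the first difference as
\[
e^{i\Omega_n(t;u)}\bigl(\zeta_n(u_0)-\zeta_n(v_0)\bigr)+\bigl(e^{i\Omega_n(t;u)}-e^{i\Omega_n(t;v)}\bigr)\zeta_n(v_0).
\]
The first piece has unit-modulus prefactor. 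Taking $\mathfrak{h}^{s+1/2}$ norms and using Proposition~\ref{t:birkhoff-bdd} on the ball of radius $M$ (Lipschitz bound for $\Phi$ via \eqref{d-1} and the mean value inequality), it is bounded by $C\|u_0-v_0\|_{H^s}$.

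For the phase difference we repeat the computation in the proof of Lemma~\ref{lem:stab-full}, using \eqref{eq:omega-n}:
\[
\sup_n|e^{i\Omega_n(t;u)}-e^{i\Omega_n(t;v)}|\le \int_0^t\sum_k 2k\bigl||\zeta_k(u(r))|^2-|\zeta_k(v(r))|^2\bigr|dr\le C\int_0^t\|\Phi(u(r))-\Phi(v(r))\|_{\mathfrak{h}^{1/2}}dr.
\]
The constant $C$ depends on the $\mathfrak{h}^{1/2}$ bounds of $\Phi(u(r))$ and $\Phi(v(r))$, which are controlled by $M$ through Proposition~\ref{t:birkhoff-bdd}. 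Multiplying by $\|\Phi(v_0)\|_{\mathfrak{h}^{s+1/2}}\le C(M)$ gives a term $C\int_0^t\|\Phi(u(r))-\Phi(v(r))\|_{\mathfrak{h}^{s+1/2}}dr$. This is the one point requiring care: we need the componentwise bound uniform in $n$, so that it can be pulled out of the $\mathfrak{h}^{s+1/2}$ sum against $\zeta_n(v_0)$.

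For the Duhamel integral, the phase again has modulus one. By \eqref{d-1} applied at $u(r)$, which lies in a ball of radius $M$ (the constant being uniform on such balls by compactness/continuity, as in the paper's use after Proposition~\ref{t:birkhoff-bdd}), together with $\|Au(r)\|_{H^s}\le\|a\|_{\ell^\infty}\|u(r)\|_{H^s}\le \|a\|_{\ell^\infty}M$, it is bounded by $C\,t\,\|a\|_{\ell^\infty}\le C T\|a\|_{\ell^\infty}$.

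Collecting the three bounds, $E(t)=\|\Phi(u(t))-\Phi(v(t))\|_{\mathfrak{h}^{s+1/2}}$ satisfies
\[
E(t)\le C\bigl(\|u_0-v_0\|_{H^s}+T\|a\|_{\ell^\infty}\bigr)+C\int_0^tE(r)\,dr,
\]
and Gronwall's inequality yields the claim. The main obstacle is making the constants depend only on $M$: the local constant in Proposition~\ref{t:birkhoff-bdd} must be upgraded to a uniform one on bounded sets. We would handle this through the boundedness statement of Proposition~\ref{t:birkhoff-bdd}, as the paper does for the norm equivalence on balls.
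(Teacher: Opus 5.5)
Your proposal is correct and follows essentially the paper's own proof. Both arguments use the Birkhoff-side Duhamel formula \eqref{eq:duh} and split the error into the initial-data difference and the phase difference, the latter controlled through \eqref{eq:omega-n} and Cauchy--Schwarz in $\mathfrak{h}^{1/2}$. Both bound the $A$-term by $CT\|a\|_{\ell^\infty}$ via \eqref{d-1} and close with Gronwall, and both take the uniformity of the constants on $H^s$-balls for granted in the same way.

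The differences are cosmetic. You pair the phase difference with $\zeta_n(v_0)$ instead of $\zeta_n(u_0)$. You also integrate the phase bound over $[0,t]$ rather than the paper's $[0,T]$, which is what the Gronwall step actually requires.
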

\begin{proof}[Proof of Lemma \ref{lem:comp_BO_Eq}]
Let $u(t) = \Psi_{\rm full}^{t} u_0$, $v(t) = \varphi^{t}_{\rm BO} v_0$ and $t \in [0,T]$. Any solution $u$ to \eqref{eq:PDE} satisfies Duhamel's formula \eqref{eq:duh} when mapped in Birkhoff coordinates. Thus
\begin{align*}
\| \zeta_n(u(t)) - \zeta_n(v(t)) \|_{\mathfrak{h}^{s+\frac12}} & \le 
\| \left(e^{i\Omega_n(t;u)} - e^{i\Omega_n(t;v)}\right) \zeta_n(u_0)\|_{\mathfrak{h}^{s+\frac12}} + \| \zeta_n(v_0) - \zeta_n(u_0) \|_{\mathfrak{h}^{s+\frac12}} \\
&\qquad + T \sup_{\tilde t \in [0,T]} \| d_{u(\tilde t)}\zeta_n[A u(\tilde t)]\|_{\mathfrak{h}^{s+\frac12}}\\
&\le \sup_{n \ge 1}| \Omega_n(t;u) - \Omega_n(t;v)| \| \zeta_n(u_0) \|_{\mathfrak{h}^{s+\frac12}}  + C \| v_0 - u_0\|_{H^s}\\ 
&\qquad +C T \| a \|_{\ell^{\infty}}
\end{align*}
where we used the estimate \eqref{d-1} to obtain the second line. In addition, it follows from \eqref{eq:omega-n} that for  $n\ge 1$
\begin{align*}
|\Omega_n(t;u) - \Omega_n(t;v)| &\le  \int_0^T \sum_{k\ge 1}2k \left| |\zeta_k(u(\tau))|^2 - |\zeta_k(v(\tau))|^2 \right| d\tau\\
&\le C \int_0^T \| \Phi(u(\tau)) + \Phi(v(\tau))\|_{\mathfrak{h}^{ \frac12}} \| \Phi(u(\tau)) - \Phi(v(\tau))\|_{\mathfrak{h}^{ \frac12}} d\tau\\
&\le C \int_0^T \| \Phi(u(\tau)) - \Phi(v(\tau))\|_{\mathfrak{h}^{ \frac12}} d\tau
\end{align*}
where we applied $\Phi^{-1}$ and used the well-posedness of the equations to obtain the last estimate. The proof follows from a Gronwall argument.
\end{proof}
\begin{lem}[An order one bound]\label{lem:tau-bd}
Let $s\ge 0$, $\tau \in (0,1]$, $u_0 \in H^s_0$, and let $A$ satisfy \eqref{cond_a}. Assume that for all $t \in [0,\tau]$, $\| \Psi_{\rm full}^{t} u_0 \|_{H^s} \le M$, for some $M>0$. Then there exists $C>0$ that continuously depends on $s$, $M$ and $\| a \|_{\ell^{\infty}}$ (independent of $\tau$) such that
\[
\sup_{t \in[0, \tau]}\| \Psi_{\rm full}^{t} u_0 - \varphi^t_{\rm BO}(e^{\tau A} u_0) \|_{H^s} \le C \| a \|_{\ell^{\infty}} \tau.
\]
\end{lem}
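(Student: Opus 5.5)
The plan is to reduce the claim to Lemma~\ref{lem:comp_BO_Eq}, applied on the short interval $[0,\tau]$ with $T$ replaced by $\tau$. Fix $t\in[0,\tau]$ and set $u(t)=\Psi^t_{\rm full}u_0$, $v(t)=\varphi^t_{\rm BO}(e^{\tau A}u_0)$, so that $v_0=e^{\tau A}u_0$. Lemma~\ref{lem:comp_BO_Eq} needs a uniform $H^s$ bound on both flows over $[0,\tau]$. For $u$ this bound $M$ is part of the hypothesis. For $v$, I would first use $\|e^{\tau A}u_0\|_{H^s}\le e^{\|a\|_{\ell^\infty}}\|u_0\|_{H^s}\le e^{\|a\|_{\ell^\infty}}M$, since $\tau\le1$. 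Then I would use that the BO flow preserves $\|\Phi(\cdot)\|_{\mathfrak h^{s+1/2}}$: by \eqref{eq:BO-Birk-eqn} each $|\zeta_n|$ is conserved. Combined with Proposition~\ref{t:birkhoff-bdd}, which says $\Phi$ and $\Phi^{-1}$ map bounded sets to bounded sets, this gives $\sup_{t\ge0}\|\varphi^t_{\rm BO}v_0\|_{H^s}\le M'$ with $M'$ depending only on $s$, $M$ and $\|a\|_{\ell^\infty}$.

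Lemma~\ref{lem:comp_BO_Eq} on $[0,\tau]$ then yields
\[
\|\Phi(u(t))-\Phi(v(t))\|_{\mathfrak h^{s+\frac12}}\le Ce^{C\tau}\bigl(\|u_0-e^{\tau A}u_0\|_{H^s}+\tau\|a\|_{\ell^\infty}\bigr).
\]
The first term is $O(\tau\|a\|_{\ell^\infty})$. On the Fourier side $|1-e^{\tau a(k)}|\le \tau|a(k)|e^{\tau|a(k)|}\le \tau\|a\|_{\ell^\infty}e^{\|a\|_{\ell^\infty}}$, so
\[
\|u_0-e^{\tau A}u_0\|_{H^s}\le \tau\|a\|_{\ell^\infty}e^{\|a\|_{\ell^\infty}}M.
\]
Since $e^{C\tau}\le e^C$, the Birkhoff-side difference is bounded by $C'\|a\|_{\ell^\infty}\tau$, uniformly in $t\in[0,\tau]$.

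Finally, I would transfer this bound back to $H^s$ using the Lipschitz property of $\Phi^{-1}$. Both $\Phi(u(t))$ and $\Phi(v(t))$ lie in a fixed ball of $\mathfrak h^{s+1/2}$, whose radius depends on $M$, $M'$ and Proposition~\ref{t:birkhoff-bdd}. By the analogue of \eqref{d-1} for $\Phi^{-1}$ and the mean value inequality along the segment joining them, $\|u(t)-v(t)\|_{H^s}\le C''\|\Phi(u(t))-\Phi(v(t))\|_{\mathfrak h^{s+1/2}}$. This gives the claim.

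I expect this last step to be the main obstacle. Proposition~\ref{t:birkhoff-bdd} gives the derivative bound only locally, in a neighborhood of each point. To get a uniform Lipschitz constant on a whole bounded ball I would argue as follows. The estimate \eqref{d-1} for $\Phi^{-1}$ extends to the ball, either by a compactness or continuation argument or by taking a uniform version of the constant from \cite{GKT-21}, where the constants depend only on the norm. The ball is convex, so the segment between $\Phi(u(t))$ and $\Phi(v(t))$ stays inside it. If only local bounds are available, I would fall back on a connectedness argument over the segment, covering it by finitely many neighborhoods on which \eqref{d-1} holds. The resulting constant then depends only on $s$, $M$ and $\|a\|_{\ell^\infty}$, as required.
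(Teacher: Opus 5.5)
Your proposal is correct and is the paper's argument: Lemma~\ref{lem:comp_BO_Eq} with $T=\tau$ plus the bound $\|(1-e^{\tau A})u_0\|_{H^s}\le c\,\tau\|a\|_{\ell^\infty}\|u_0\|_{H^s}$. You also spell out two steps the paper leaves implicit, namely the uniform $H^s$ bound on $\varphi^t_{\rm BO}(e^{\tau A}u_0)$ via conservation of $|\zeta_n|$, and the return from $\mathfrak{h}^{s+\frac12}$ to $H^s$. For that return, rely as the paper does on constants in Proposition~\ref{t:birkhoff-bdd} that are uniform on bounded sets; your compactness/finite-cover fallback would not deliver this, because balls of $\mathfrak{h}^{s+\frac12}$ are not compact and a cover of the segment gives a constant depending on its endpoints rather than only on $s$, $M$, $\|a\|_{\ell^\infty}$.
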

\begin{proof}[Proof of Lemma \ref{lem:tau-bd}] The proof follows from Lemma \ref{lem:comp_BO_Eq} (with $T=\tau$) and the fact that for $v_0 = e^{\tau A} u_0$ we have
\begin{equation*}
\| v_0 - u_0\|_{H^s} = \| (1-e^{\tau A}) u_0 \|_{H^s} \leq c \tau \|a \|_{\ell^{\infty}} \|u_0 \|_{H^s}
\end{equation*}
for some constant $c$ that only depends on $\|a \|_{\ell^{\infty}}$.
\end{proof}
\begin{proof}[Proof of Lemma \ref{eq:loc-err}] 
Let $\tau >0$.
Define $v_0 = e^{\tau A}u_0$, and consider the flow $v(t) = \varphi_{\rm BO}^t(v_0)$ of the Benjamin--Ono equation evolving from $v_0$. It satisfies Duhamel's formula
\[
v(t) = e^{t\partial_x |D|}v_0 - \int_0^t e^{(t-r)\partial_x |D|}\partial_x v^2(r)dr.
\]
When $t= \tau$, the solution $v(\tau)$ is the Lie splitting scheme over one time step. 
Next consider the full flow $u(t)$ to \eqref{eq:PDE} written as Duhamel's formula centered about the linear term $e^{t(\partial_x |D| + A)}u_0$,
\[
u(t) = e^{t(\partial_x |D| + A)}u_0 - \int_0^t e^{(t-r)(\partial_x |D| + A)}\partial_x u^2(r)dr.
\]
We now consider the difference of $u$ and $v$ at time $\tau$: observing that the operator $\partial_x|D|$ and $A$ are diagonal in Fourier space, we have $e^{t\partial_x |D| + A} = e^{t\partial_x |D|}e^{tA}$, and hence
\[
u(\tau) - v(\tau) = -\int_0^\tau e^{(\tau-r)\partial_x |D|}\left( e^{(\tau -r)A}\partial_x u^2(r) - \partial_x v^2(r)\right) dr.
\]
Thus,
\begin{align*}
\| u(\tau) - v(\tau) \|_{H^s} &\le \tau \sup_{r\in[0,\tau]} \| e^{(\tau -r)A} \partial_x u^2(r) - \partial_x  v^2(r)\|_{H^s} \\
&\le  \tau \sup_{r\in[0,\tau]} \left( \| u^2(r) - v^2(r)\|_{H^{s+1}}+ \tau \| a \|_{\ell^{\infty}} \| \partial_x u^2(r)\|_{H^s} \right)\\
& \le C \tau \sup_{r\in[0,\tau]} \| u(r) - v(r)\|_{H^{s+1}} + C \| a \|_{\ell^{\infty}} \tau^2
\end{align*}
where $C$ is a generic constant which continuously depends on $\| a \|_\infty$ and $\| u_0\|_{H^{s+1}}$. We conclude the proof by applying Lemma \ref{lem:tau-bd} to the first term above, obtaining the desired second-order estimate.

\end{proof}

\subsection{Proof of Theorem \ref{thm:cv-time}}\label{sec:proof_thm_1}

Let $s \ge 0$, $u_0 \in H^{s+1}_0$, and $T>0$. Let $u \in C([0,T], H^{s+1}_0)$ be the unique solution to \eqref{eq:PDE}. We write the global error as a telescoping sum
\begin{equation}\label{eq:telescopic}
\left\|\left(\Psi_{\rm full}^{n\tau} - (\varphi^{\tau}_{\rm BO}e^{\tau A} )^n\right) u_0\right\|_{H^s} 
\le \sum_{j=1}^n \| \mathcal{I}_{j}\|_{H^s},
\end{equation}
where $\mathcal{I}_{j} = \left((\varphi^{\tau}_{\rm BO}e^{\tau A} )^{j-1} \Psi_{\rm full}^{\tau} - (\varphi^{\tau}_{\rm BO}e^{\tau A} )^j\right) \Psi_{\rm full}^{(n-j)\tau}u_0$. Fix $\tilde\ell \in \{1, \dots, n\}$, set $w_0 = v_0 = \Psi_{\rm full}^{(n-\tilde\ell)\tau}u_0$, and for $k \le \tilde\ell$ define 
\[
w_k := (\varphi^\tau_{\rm BO} e^{\tau A})^k w_0 \quad \text{and} \quad v_k := \Psi_{\rm full}^{k\tau} v_0.
\]
We next show that the numerical iterates $w_{k}$ remain bounded, by working in the Birkhoff coordinates.

By the well-posedness of the equation, $M_{s+1} := \sup_{t \in [0,T]} \|\Psi_{\rm full}^t u_0\|_{H^{s+1}} < \infty$. Set $M_\Phi := \sup_{t \in [0,T]} \|\Phi(\Psi_{\rm full}^t u_0)\|_{\mathfrak{h}^{s+1/2}}$ and $R_\Phi := M_\Phi + 1$. 
By Lemma \ref{t:birkhoff-bdd}, there exists $\tilde M > 0$ such that the bound $\|\Phi(f)\|_{\mathfrak{h}^{s+1/2}} \le R_\Phi$ implies that $\|f\|_{H^s} \le \tilde M$. Thus, by Lemma \ref{lem:stab-split}, there exists a uniform stability constant $\tilde{c} = c(\tilde M, \|a\|_{\ell^\infty}) > 0$ such that for any $f, g \in H^s_0$ with $\|\Phi(f)\|_{\mathfrak{h}^{s+1/2}}, \|\Phi(g)\|_{\mathfrak{h}^{s+1/2}} \le R_\Phi$:
\begin{equation}\label{eq:unif-stab-pf}
\|\Phi(\varphi^\tau_{\rm BO}e^{\tau A} f) - \Phi(\varphi^\tau_{\rm BO}e^{\tau A} g)\|_{\mathfrak{h}^{s+\frac12}} \le e^{\tilde{c}\tau} \|\Phi(f) - \Phi(g)\|_{\mathfrak{h}^{s+\frac12}}.
\end{equation}
Furthermore, by Lemma \ref{eq:loc-err} and Lemma \ref{t:birkhoff-bdd}, for any $v \in H^{s+1}_0$ bounded by $M_{s+1}$, we have
\begin{equation}\label{eq:loc-err-pf}
\|\Phi(\Psi_{\rm full}^\tau v) - \Phi(\varphi^\tau_{\rm BO}e^{\tau A} v)\|_{\mathfrak{h}^{s+\frac12}} \le C_{\rm loc} \|a\|_{\ell^\infty} \tau^2,
\end{equation}
where $C_{\rm loc}$ depends on $M_{s+1}$.

We now show that $w_k$ remain in the ball $B(0, R_\Phi) \subset \mathfrak{h}^{s+1/2}$.
We claim by induction that $\|\Phi(w_k)\|_{\mathfrak{h}^{s+1/2}} \le R_\Phi$ and $\|\Phi(w_k) - \Phi(v_k)\|_{\mathfrak{h}^{s+1/2}} \le C_{\rm loc} \|a\|_{\ell^\infty} e^{\tilde{c}T} k \tau^2$.

The base case $k=0$ is immediate. Assuming the claim holds for some $k < \tilde\ell$, we decompose the error at step $k+1$ as follows:
\begin{equation}\label{eq:decomp-pf}
\begin{split}
\Phi(w_{k+1}) - \Phi(v_{k+1}) &= \left( \Phi(\varphi^\tau_{\rm BO}e^{\tau A} w_k) - \Phi(\varphi^\tau_{\rm BO}e^{\tau A} v_k) \right) \\
&\qquad + \left( \Phi(\varphi^\tau_{\rm BO}e^{\tau A} v_k) - \Phi(\Psi_{\rm full}^\tau v_k) \right).
\end{split}
\end{equation}
By using the first bound in the induction hypothesis, $\|\Phi(w_k)\|_{\mathfrak{h}^{s+1/2}} \le R_\Phi$ (and $\|\Phi(v_k)\|_{\mathfrak{h}^{s+1/2}} \le M_\Phi < R_\Phi$), and hence by applying \eqref{eq:unif-stab-pf} and then \eqref{eq:loc-err-pf} to the first and respectively second term in \eqref{eq:decomp-pf} gives
\[
\|\Phi(w_{k+1}) - \Phi(v_{k+1})\|_{\mathfrak{h}^{s+\frac12}} \le e^{\tilde{c}\tau} \|\Phi(w_k) - \Phi(v_k)\|_{\mathfrak{h}^{s+\frac12}} + C_{\rm loc} \|a\|_{\ell^\infty} \tau^2.
\]
Unrolling this recurrence across steps $m = 0, \dots, k$ yields
\[
\|\Phi(w_{k+1}) - \Phi(v_{k+1})\|_{\mathfrak{h}^{s+\frac12}} \le C_{\rm loc} \|a\|_{\ell^\infty} \tau^2 \sum_{m=0}^k e^{m\tilde{c}\tau} \le C_{\rm loc} \|a\|_{\ell^\infty} e^{\tilde{c}T} (k+1)\tau^2,
\]
proving the error bound for step $k+1$. By the triangle inequality,
\begin{align*}
\|\Phi(w_{k+1})\|_{\mathfrak{h}^{s+\frac12}} &\le \|\Phi(v_{k+1})\|_{\mathfrak{h}^{s+\frac12}} + \|\Phi(w_{k+1}) - \Phi(v_{k+1})\|_{\mathfrak{h}^{s+\frac12}} \\
&\le M_\Phi + C_{\rm loc} \|a\|_{\ell^\infty} e^{\tilde{c}T} T \tau \le R_\Phi
\end{align*}
for all $\tau \le \tau_0$ with 
\begin{equation}\label{eq:tau_0}
\tau_0:= \min\left(1, \frac{1}{C_{\rm loc} \|a\|_{\ell^\infty} e^{\tilde{c}T} T}\right),
\end{equation}
completing the induction.

Finally, coming back to \eqref{eq:telescopic}, we apply $\Phi$ once, and apply Lemma \ref{lem:stab-split} $(j-1)$ times with the uniform constant $\tilde{c}$. Together with \eqref{eq:loc-err-pf}, this yields
\[
\|\mathcal{I}_j\|_{H^s} \le \tilde{C} e^{(j-1)\tau \tilde{c}} \|\Phi(\Psi_{\rm full}^\tau v) - \Phi(\varphi^\tau_{\rm BO}e^{\tau A} v)\|_{\mathfrak{h}^{s+\frac12}} \le C' \|a\|_{\ell^\infty} \tau^2 e^{j \tau \tilde{c}},
\]
where $v = \Psi_{\rm full}^{(n-j)\tau}u_0$. Summing over $j = 1, \dots, n$ concludes the proof:
\[
\sum_{j=1}^n \|\mathcal{I}_j\|_{H^s} \le C' \|a\|_{\ell^\infty} \tau^2 \sum_{j=1}^n e^{j \tau \tilde{c}} \le \|a\|_{\ell^\infty} C(s, T, \|u_0\|_{H^{s+1}}, \|a\|_{\ell^\infty}) e^{\tilde{c} T} \tau.
\]

\begin{rem}[The $L^2$-norm: an easier case]
The above argument is easier when taking $s=0$ as $\varphi^{\tau}_{\rm BO}$ preserves the $L^2$-norm. Indeed, given that for any $f \in L^2$
\[
\| e^{\tau A } f\|_{L^2}, \|\Psi_{\rm full}^{t} f \|_{L^2} \le e^{\tau \sup \Re(a)}\| f\|_{L^2}
\]
we have
 \[\|h_{1,\ell,\ell} \|_{L^2} = e^{\tau(\ell-1) \sup \Re(a)}\|\Psi_{\rm full}^{(n-\ell+1)\tau}u_0 \|_{L^2} \le C e^{T \sup \Re(a)} \|u_0\|_{L^2},
 \]
  and  
\[\|h_{2,\ell,\ell} \|_{L^2} \le e^{\tau\ell \sup \Re(a)} \|\Psi_{\rm full}^{(n-\ell)\tau}u_0 \|_{L^2} \le Ce^{T \sup \Re(a)} \| u_0\|_{L^2}.\]
Note that we did not need to use the stability bound of Lemma \ref{lem:stab-split} to obtain these bounds.
Thus, by applying $\Phi$ together with Lemma \ref{lem:stab-split} $j$-times we obtain the desired result. 
\end{rem}

\begin{rem}[Space discretization via the explicit formula for \eqref{BO}]\label{rem:space-approx}
To obtain a fully discrete scheme, only a spatial approximation of the flow $\varphi^\tau_{\rm BO}$ is required, as presented in Section~\ref{sec:numerics}, alongside numerical experiments. However, extending the stability analysis to the fully discrete setting in Birkhoff coordinates presents new challenges. Indeed, each Birkhoff coordinate $\zeta_n$ is expressed in terms of the eigenvalues of the underlying Lax operator and its $n$-th eigenfunction  \cite{GK21}*{Eq.~(4.1)}. Because our spatial discretization does not preserve the first $K$ eigenvalues exactly, a non-integrable perturbation error is introduced, generating an additional error term on the Birkhoff side (which can't easily be seen as a solution to a perturbed equation). Rigorously analyzing this term requires a deeper understanding of the correspondence between the schemes \eqref{eq:ABCD-scheme} for $\varphi^\tau_{\rm BO}$ and an approximation of the first $K$ Birkhoff coordinates. The fully discrete analysis thus requires dedicated analytical techniques and will be given in a future work.
\end{rem}

\subsection{Convergence in the deep-water limit for ILW}\label{sec:comparison-ILW-BO}

We recall that for the ILW equation, the perturbation symbol is given by
\[
a(k) =  i (\coth(\delta k) - \sgn(k)) k^2,
\]
which satisfies the explicit exponential decay bound for any $\delta \geq 1$:
\begin{equation}\label{estim_A_ILW}
\| a \|_{\ell^{\infty}} \leq 3e^{-2 \delta}.
\end{equation}

The following uniform a priori bound is a direct consequence of \cite{CLOP-24}*{Theorem 1.2}.

\begin{prop}\label{prop:a-priori-ILW} Let $s \geq 0$, $u_{0} \in H^s_0$. Then, for any $T>0$, there exist $\delta_0 \geq 1$ and a constant $C>0$ such that for any $\delta \ge \delta_0$,
\[
\sup_{t\in[0,T]}\| u^{\delta}(t)\|_{H^s}\le C
\]
where $u^{\delta}$ is the unique solution to \eqref{ILW} with initial datum $u_0$.
\end{prop}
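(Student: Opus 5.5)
The plan is to derive the bound from the uniform-in-$\delta$ a priori estimates of \cite{CLOP-24}*{Theorem 1.2}, which the paper says this proposition follows from directly. That theorem gives, for $s\ge 0$ and $u_0\in H^s_0$, global well-posedness of \eqref{ILW} in $H^s$ with bounds uniform in the depth parameter $\delta$ in the deep-water regime. It also gives convergence of $u^\delta$ to the \eqref{BO} solution $v=\varphi^t_{\rm BO}u_0$ in $\mathcal{C}([0,T],H^s)$ as $\delta\to\infty$.

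First, I would recall the limiting BO solution. By the global well-posedness of \eqref{BO} in $H^s_0$ \cite{GKT-23}, the quantity $M_{\rm BO}:=\sup_{t\in[0,T]}\|\varphi^t_{\rm BO}u_0\|_{H^s}$ is finite. I would then invoke the deep-water convergence statement of \cite{CLOP-24}: $\sup_{t\in[0,T]}\|u^\delta(t)-\varphi^t_{\rm BO}u_0\|_{H^s}\to 0$ as $\delta\to\infty$.

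Next, by the definition of the limit, I would choose $\delta_0\ge 1$ such that for all $\delta\ge\delta_0$ this supremum is at most $1$. The triangle inequality then gives $\sup_{t\in[0,T]}\|u^\delta(t)\|_{H^s}\le M_{\rm BO}+1=:C$. The constant depends only on $s$, $u_0$ and $T$, and the range $\delta\ge\delta_0$ is exactly what the statement asserts.

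The main obstacle is making sure the cited result applies in the generality needed: zero-mean data, every $s\ge 0$, and $\delta\to\infty$ (rather than the shallow-water scaling).

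\begin{itemize}
\item If \cite{CLOP-24} only provides convergence in a weaker norm, or only uniform bounds for $\delta$ in compact sets, I would instead use the Birkhoff-side argument of Lemma~\ref{lem:comp_BO_Eq}.
\item In that argument, the ILW symbol satisfies \eqref{estim_A_ILW}, so Duhamel's formula \eqref{eq:duh} yields $\|\Phi(u^\delta(t))-\Phi(\varphi^t_{\rm BO}u_0)\|_{\mathfrak h^{s+1/2}}\le Ce^{Ct}T e^{-2\delta}$ on any interval where $\|u^\delta\|_{H^s}\le 2M_{\rm BO}'$.
\item A continuity (bootstrap) argument in $t$, combined with Proposition~\ref{t:birkhoff-bdd} to pass between Birkhoff and Sobolev norms, then closes the estimate once $e^{-2\delta_0}$ is sufficiently small.
\end{itemize}

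The delicate point in this alternative route is that the constant in \eqref{d-1} is only locally uniform. I would therefore work on a fixed bounded Birkhoff ball around the BO trajectory, whose closure is compact in time, and use the boundedness statement of Proposition~\ref{t:birkhoff-bdd} to get uniform constants there.
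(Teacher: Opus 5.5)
Your argument matches the paper, which records this bound as a direct consequence of \cite{CLOP-24}*{Theorem 1.2}. Unpacking that citation as you do, via the deep-water convergence $u^\delta\to\varphi^t_{\rm BO}u_0$ in $\mathcal{C}([0,T],H^s)$ and the triangle inequality against the bounded BO trajectory, is the natural reading; the moving-frame form of ILW only adds a spatial translation, which leaves $H^s$ norms unchanged, and the Birkhoff-side bootstrap you keep in reserve is not needed.
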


Combining Lemma~\ref{lem:comp_BO_Eq} with $u_0 = v_0$ and estimate \eqref{estim_A_ILW} yields the following comparison estimate on the Birkhoff side.

\begin{prop}[Comparison between ILW and BO]\label{prop:comparison_ILW_BO}
Let $s\ge 0$, $u_0 \in H^{s}_{0}$, $M>0$, and $T>0$. Then there exist $\delta_0 \geq 1$ and a constant $C_0>0$ independent of $\delta$ (and depending only on $M, s,$ and $T$) such that for any $t \in [0,T]$ and any $\delta \geq \delta_0$,
\[
\|\Phi\left(u^{\delta}(t) \right) - \Phi\left( \varphi^t_{\rm BO} u_0 \right)\|_{\mathfrak{h}^{s+\frac12}} \le C_0 e^{C_0 T} T e^{-2 \delta},
\]
where $u^{\delta}$ is the unique solution to \eqref{ILW} and $\varphi^t_{\rm BO} u_0$ is the solution to \eqref{BO}, both evolving from the initial datum $u_0$.
\end{prop}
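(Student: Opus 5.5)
The plan is to apply Lemma~\ref{lem:comp_BO_Eq} with $u_0=v_0$, so that the term $\|u_0-v_0\|_{H^s}$ vanishes. For this we must produce a bound $M$ on both flows over $[0,T]$ that does not depend on $\delta$. We then substitute the ILW symbol bound \eqref{estim_A_ILW}, and we must check that the resulting constant depends only on $M$, $s$ and $T$.

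We first fix the uniform bound $M$. Proposition~\ref{prop:a-priori-ILW} gives $\delta_0\ge 1$ and $C>0$ such that $\sup_{t\in[0,T]}\|u^\delta(t)\|_{H^s}\le C$ for all $\delta\ge\delta_0$. For the BO flow, Proposition~\ref{t:birkhoff-bdd} together with the explicit dynamics \eqref{eq:BO-Birk-eqn} shows that $|\zeta_n(\varphi^t_{\rm BO}u_0)|=|\zeta_n(u_0)|$ for all $t$. Hence $\|\Phi(\varphi^t_{\rm BO}u_0)\|_{\mathfrak{h}^{s+1/2}}$ is conserved, and by the boundedness of $\Phi^{-1}$ on bounded sets, $\sup_{t\ge0}\|\varphi^t_{\rm BO}u_0\|_{H^s}$ is bounded by a constant depending on $\|u_0\|_{H^s}$. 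We take $M$ to be the larger of the two bounds; it is independent of $\delta\ge\delta_0$.

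With this $M$, Lemma~\ref{lem:comp_BO_Eq} applied with $A$ given by the ILW symbol \eqref{F:ILW} and $v_0=u_0$ gives, for $t\in[0,T]$,
\[
\|\Phi(u^\delta(t))-\Phi(\varphi^t_{\rm BO}u_0)\|_{\mathfrak{h}^{s+\frac12}}\le C e^{Ct}\,T\,\|a\|_{\ell^\infty}\le 3C e^{CT}\,T\,e^{-2\delta},
\]
where the last inequality uses \eqref{estim_A_ILW}, valid since $\delta\ge\delta_0\ge1$. Setting $C_0=3C$, or any larger constant so that $Ce^{CT}\le C_0e^{C_0T}$, gives the stated estimate.

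The main obstacle is to confirm that the constant $C$ in Lemma~\ref{lem:comp_BO_Eq} is uniform in $\delta$, even though $A$ depends on $\delta$. In that proof the perturbation enters in two places. The first is the term $T\sup\|d_u\zeta_n[Au]\|$, which is bounded via \eqref{d-1} by $C(M)\|a\|_{\ell^\infty}\|u\|_{H^s}$. The second is the Gronwall step, which uses only the Birkhoff bounds on the $M$-ball. The constants in Proposition~\ref{t:birkhoff-bdd} are local near each point, so we need uniform versions on the ball of radius $M$. We would obtain these by compactness in the weaker topology, or by using the analyticity of $\Phi$ on bounded sets as in \cite{GKT-21,GL-25}. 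Once this uniformity holds, only $\|a\|_{\ell^\infty}$ carries the $\delta$-dependence, and it is exponentially small.
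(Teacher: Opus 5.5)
Your proposal is correct and is essentially the paper's argument. The paper simply combines Lemma~\ref{lem:comp_BO_Eq} with $v_0=u_0$ and the symbol bound \eqref{estim_A_ILW}; your $\delta$-independent choice of $M$, via Proposition~\ref{prop:a-priori-ILW} and conservation of $|\zeta_n|$ under \eqref{eq:BO-Birk-eqn}, just makes explicit what the paper uses implicitly.

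Your last paragraph is not needed. Lemma~\ref{lem:comp_BO_Eq} as stated already gives $C=C(M)$, with $\delta$ entering only through $\|a\|_{\ell^\infty}$. Note also that weak compactness of the ball would not by itself make the norm-neighbourhood constants of Proposition~\ref{t:birkhoff-bdd} uniform.
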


We are now in a position to prove our second main convergence result.
\begin{proof}[Proof of Theorem \ref{thm:cv-ILW-BO}]

Let $T>0$, $s \ge 0$, $u_0 \in H^{s+1}_0$, and $n \in \mathbb{N}$ such that $n\tau \le T$.  Let $u^{\delta}$ be the solution to \eqref{ILW} with initial datum $u_0$. By the triangle inequality, we split the total error into time-discretization error and the error between the models:
\[
\|(\varphi^{\tau}_{\rm BO} e^{\tau A})^nu_0 - \varphi^{n\tau}_{\rm BO} u_0 \|_{H^s} \leq \|(\varphi^{\tau}_{\rm BO} e^{\tau A})^nu_0 - u^\delta(n \tau) \|_{H^s} + \|u^\delta(n \tau) - \varphi^{n\tau}_{\rm BO} u_0 \|_{H^s}.
\]
For the first term in the above right-hand side, applying Theorem~\ref{thm:cv-time} together with the decay bound $\|a\|_{\ell^\infty} \le 3e^{-2\delta}$ from \eqref{estim_A_ILW} yields the desired bound.
Finally, for the second term, we note that by Proposition~\ref{prop:a-priori-ILW} and the well-posedness of \eqref{BO}, both $u^\delta(t)$ and $\varphi^t_{\rm BO} u_0$ remain in a fixed bounded subset of $H^s_0$ for all $t \in [0,T]$ and $\delta \ge \delta_0$. Therefore, the second bound follows by combining Proposition~\ref{t:birkhoff-bdd} with Proposition~\ref{prop:comparison_ILW_BO}.
\end{proof}

\begin{rem}[Size of time step $\tau$ in Theorem \ref{thm:cv-ILW-BO}]\label{rem:tau_0}
Note that Theorem \ref{thm:cv-ILW-BO} holds for all $\tau \in (0,1]$ without restricting $\tau \le \tau_0 < 1$, with $\tau_0$ defined in \eqref{eq:tau_0}. Indeed, using \eqref{estim_A_ILW}, choosing $\delta_0 \ge 1$ sufficiently large ensures that
\[
C_{\rm loc} \|a\|_{\ell^\infty} e^{\tilde{c}T} T \le C_{\rm loc} e^{-2\delta_0} e^{\tilde{c}T} T \le 1
\]
for all $\delta \ge \delta_0$, yielding $\tau_0 = 1$.
\end{rem}

\section{Numerical experiments}\label{sec:numerics}
Let $K$ denote the highest Fourier frequency used in the discretization.
Given the PDE \eqref{eq:PDE} with an associated perturbative operator $A$ and mean-zero initial data $u_0$, we introduce the following class of \textit{fully discrete schemes}:
\begin{equation}
u_K^{n+1} = \varphi^{\tau,K}_{\rm BO} e^{\tau A} u^n_K, \quad n\tau \le T, \quad u^0_{K} = \left(\Pi_{K} + \overline{\Pi}_K\right)u_0.
\label{eq:fully-discrete}
\end{equation}
with the Szeg\H o projector
$\widehat{\Pi}_{{ K}}(k) = \mathbbm{1}_{0\le k < { K}}$. 
As presented in Alama Bronsard--Laurens \cite{ABL-26}, there are multiple ways to efficiently and accurately approximate $\varphi^{\tau}_{\rm BO}$ by discretizing the explicit formula \eqref{eq:explicitForm}, yielding schemes $\varphi_{\rm BO}^{\tau,K}$ that only require an approximation in space (and which are exact for each time $\tau$). By composing this approximation with the perturbative flow $e^{\tau A}$, we obtain a robust \textit{class} of fully discrete schemes.

For the numerical simulations presented below, we implement the flow $v_{K}(t) = \varphi^{t,K}_{\rm BO}v_0$ using the simplest approximation of the explicit formula for \eqref{BO}, first introduced by Alama~Bronsard--Chen--Dolbeault \cite{ABCD-25} (see also Remark \ref{rem:l2-pres-schemes}):
\begin{align}\label{eq:ABCD-scheme}
\begin{cases}\smallskip
 \widehat{ v_{{ K}}}(t,k) =\langle (\mathrm{e}^{it}\mathrm{e}^{2it{ L}_{ v_0, {K}}}{ S^{*}})^k \Pi_{ K} v_0, 1 \rangle, \quad 0 \le k < { K}\\ \smallskip
 \widehat{ v_{ K}}(t,k) = \overline{\widehat{ v_{ K}}(t,-k)}, \quad { -K} < k<0
\end{cases}
\end{align}
with  the Lax and shift operators
\[
{L}_{v_0, { K}}f_{ K} = Df_{ K} - \Pi_{K}(v_0f_{K}), \quad { S^{*}}f_{{ K}}= \Pi (e^{-ix}f_{ K}),
\]
where 
$f_{ K} \in  L^2_{{ K}} = \Pi_{ K} L^2_{+} = \bigl\{f\in L^2: \widehat{f}(k) = 0, \ k<0 \ \text{or} { \ k \ge K}\bigr\}$. 

It is worth noting that Birkhoff coordinates serve purely as an analytical tool for our proof of convergence. In practice, the scheme is implemented directly in Fourier space without transforming into Birkhoff coordinates.

\begin{rem}\label{rem:l2-pres-schemes}
While we utilize the scheme of \cite{ABCD-25} for its simplicity, this approach has been generalized by Alama Bronsard--Laurens \cite{ABL-26} to include more subtle approximations of the explicit formula that guarantee exact discrete $L^2$-norm preservation. When the perturbative term $A$ generates an isometry (i.e., $\|e^{t A} v\|_{L^2} = \|v\|_{L^2}$, $t>0$), as is for example the case for \eqref{ILW}, \eqref{Smith} and \eqref{KdVBO}, utilizing these refined approximations (see \cite{ABL-26}*{Example 2.7}) ensures that the resulting fully discrete scheme \eqref{eq:fully-discrete} preserves the discrete mass over time:
$$ \|u_K^{n+1}\|_{L^2} = \|\varphi^{\tau,K}_{\rm BO} e^{\tau A} u^n_K\|_{L^2} = \|e^{\tau A} u^n_K\|_{L^2} = \|u^n_K\|_{L^2}, \quad n\tau \le T, $$
where the second equality follows from \cite{ABL-26}*{Proposition 2.8}, and the third relies on the isometry of $e^{\tau A}$ on $L^2$.
\end{rem}

\begin{rem}[Convergence of the schemes based on the explicit \eqref{BO} formula]\label{rem:ABCD-err}
In the work \cite{ABCD-25} the authors construct the scheme \eqref{eq:fully-discrete} and prove quantitative decay rates for smooth enough solutions: for any $t\in \mathbb{R}$, $0\le r\le s$ and $u_0 \in H^s(\mathbb{T})$ with $s>1$, there exists $C>0$ such that
\[
\|u_K(t) - u(t)\|_{H^r} \le C(1+t)K^{-s+1+r}.
\]

In contrast the work \cite{ABL-26}  introduce a class of schemes, generalizing that of \cite{ABCD-25}, and employs a different set of techniques tailored to treat data in $L^2$, where no additional regularity is demanded on the solution i.e. convergence in $L^2$-norm for  data in $L^2$.
\end{rem}

\subsection{Error plots and energy preservation for ILW}

Throughout this subsection, we present simulations for the \eqref{ILW} equation using randomized initial data in $H^1_0(\mathbb{T})$, the construction of which we detail next.
\newline

{\bf The initial data.} We construct the Fourier coefficients $(\widehat{U}_{k})_{|k| < K}$ by sampling from Gaussian distributions. Specifically, for $0 < k < K$, we draw independent complex Gaussians $\widehat{U}_k \sim \mathcal{N}(0,1/2) + i\mathcal{N}(0,1/2)$. We then enforce the symmetry condition $\widehat{U}_{-k} = \overline{\widehat{U}_{k}}$ for $-K < k < 0$, and lastly take $\widehat{U}_0 =0$.
The initial datum is then defined as
\begin{equation}
\label{eq:randomID}
u_{0}(x) = \sum_{-K < k < K} (1+|k|)^{-(1+\theta)}\widehat{U}_{k} e^{ikx}, \quad x\in \mathbb{T}.
\end{equation}
We normalize the function in the $L^2$-norm by setting $u_{0} \leftarrow u_{0}/\| u_{0} \|_{L^2}$. We require $\theta > 1/2$ to guarantee that the limiting function as $K \to \infty$ belongs to $H^1_0(\mathbb{T})$.
\newline

In Figure \ref{fig:temporal_convergence} we plot the temporal convergence in $L^2$-norm for the initial data \eqref{eq:randomID}, we see the expected first order convergence proven in Theorem \ref{thm:cv-time}.

\begin{figure}[htbp]
    \centering
    \includegraphics[width=0.7\textwidth]{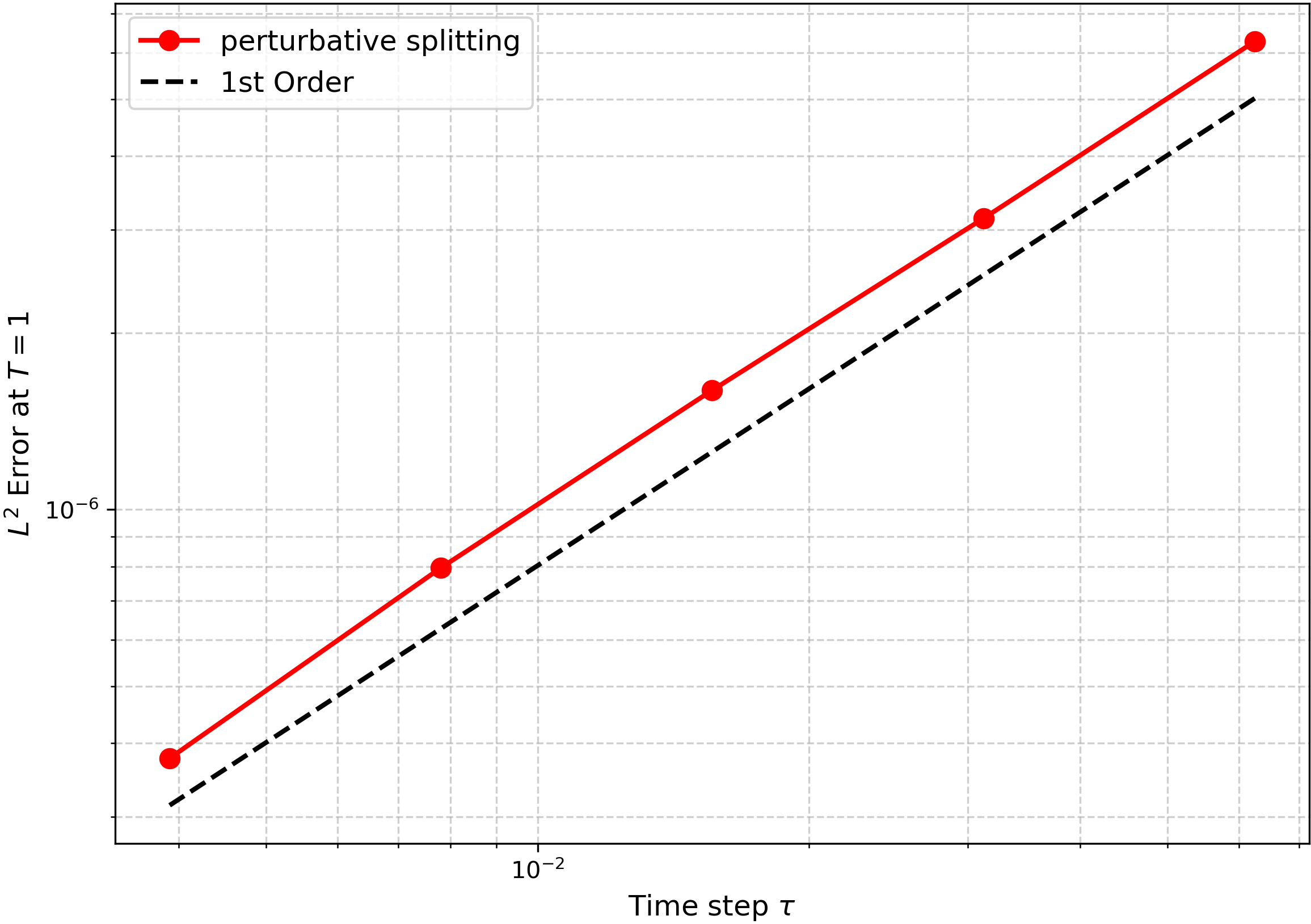}
    \caption{Temporal $L^2$ error at time $T=1.0$ for the perturbative splitting scheme \eqref{eq:fully-discrete} applied to randomized $H^1_0$ initial data. The spatial grid is fixed at $K=128$. The time step $\tau$ varies from $1/256$ to $1/16$, and the error is calculated relative to a reference solution computed with $\tau=1/2048$. The dashed black line confirms the theoretical first-order convergence in time.}
    \label{fig:temporal_convergence}
\end{figure}

\subsubsection{Comparison with classical approaches}
To evaluate the performance of our proposed method, we compare it against two standard schemes: the explicit fourth-order Runge-Kutta (RK4) method and the  Lie splitting method. We recall that the Lie splitting method is given in \eqref{eq:lie-splitting} (consisting of a splitting between the nonlinear Burgers flow and the linear perturbative propagator), which we couple with a pseudo-spectral method in space using a standard explicit integration step. While, to the best of our knowledge, no convergence results exist for these schemes applied to the ILW equation, they are natural candidates for its efficient approximation, and convergence analyses for splitting methods do exist for other related
dispersive equations sharing Burgers nonlinearity, notably, in \cites{HKRT-11, HLR-13, rousset-22} for the KdV equation and \cite{DHKR-15} for the BO equation.
\newline

We consider the preservation of the energy over time. For the ILW equation, the conserved energy is given by
\begin{equation}
    \mathcal{H}(u) = - \int \left( \frac{1}{2} u \mathcal{M} u + \frac{1}{3} u^3 \right) dx,
\end{equation}
where the operator $\mathcal{M}$ is defined via the Fourier multiplier $\widehat{\mathcal{M}u}(k) = k \coth(\delta k) \hat{u}(k)$.

\begin{figure}[h]
    \centering
    \includegraphics[width=0.95\textwidth]{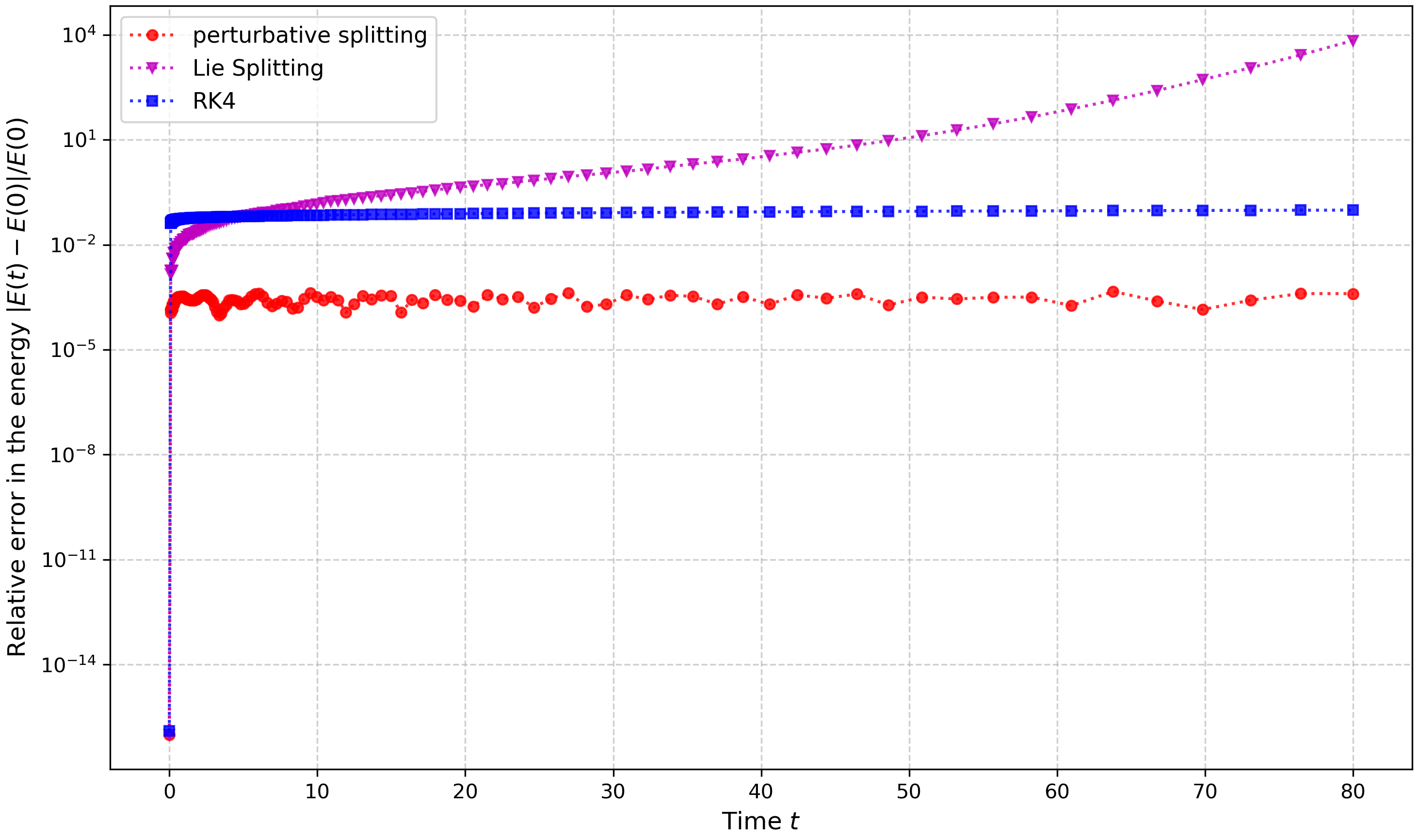}
    \vspace{0.3cm}
    \includegraphics[width=0.95\textwidth]{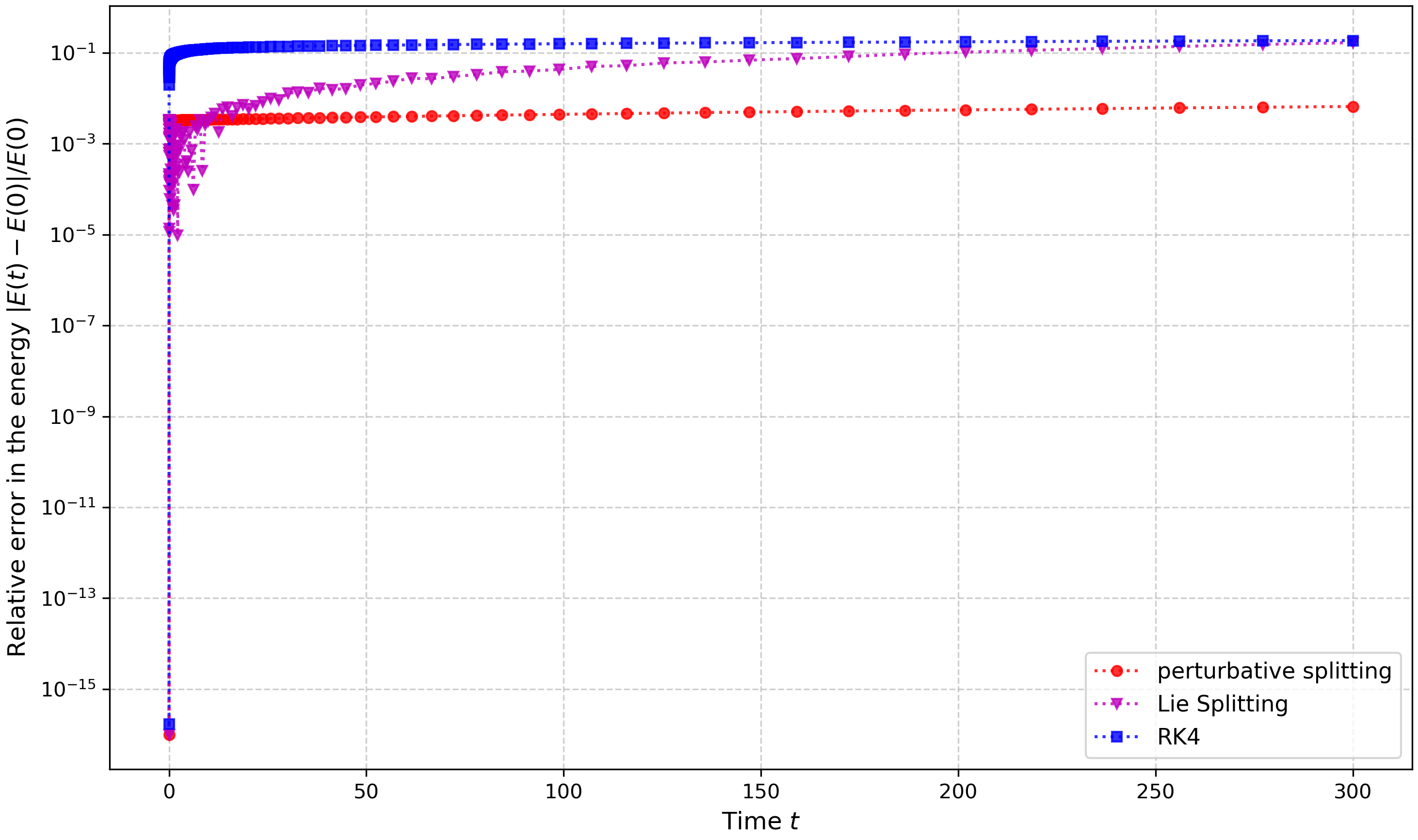}
    \caption{Relative error of the energy associated to \eqref{ILW}  for randomized $H^1_0$ initial data at a fixed spatial resolution of $K=64$. Top panel ($T=80$): computed using a quadratic time step ($\tau \sim K^{-2}$) for RK4, and a {\it linear} time step ($\tau \sim K^{-1}$) for the classical Lie (purple) and perturbative splitting schemes (red). Bottom panel ($T=200$): computed using a quadratic time step ($\tau \sim K^{-2}$) for all schemes. The Lie splitting method nearly preserves the energy only under the quadratic time step condition, in contrast to our new scheme.}
    \label{fig:energy_short_long}
\end{figure}

\begin{figure}[h]
    \centering
    \includegraphics[width=0.95\textwidth]{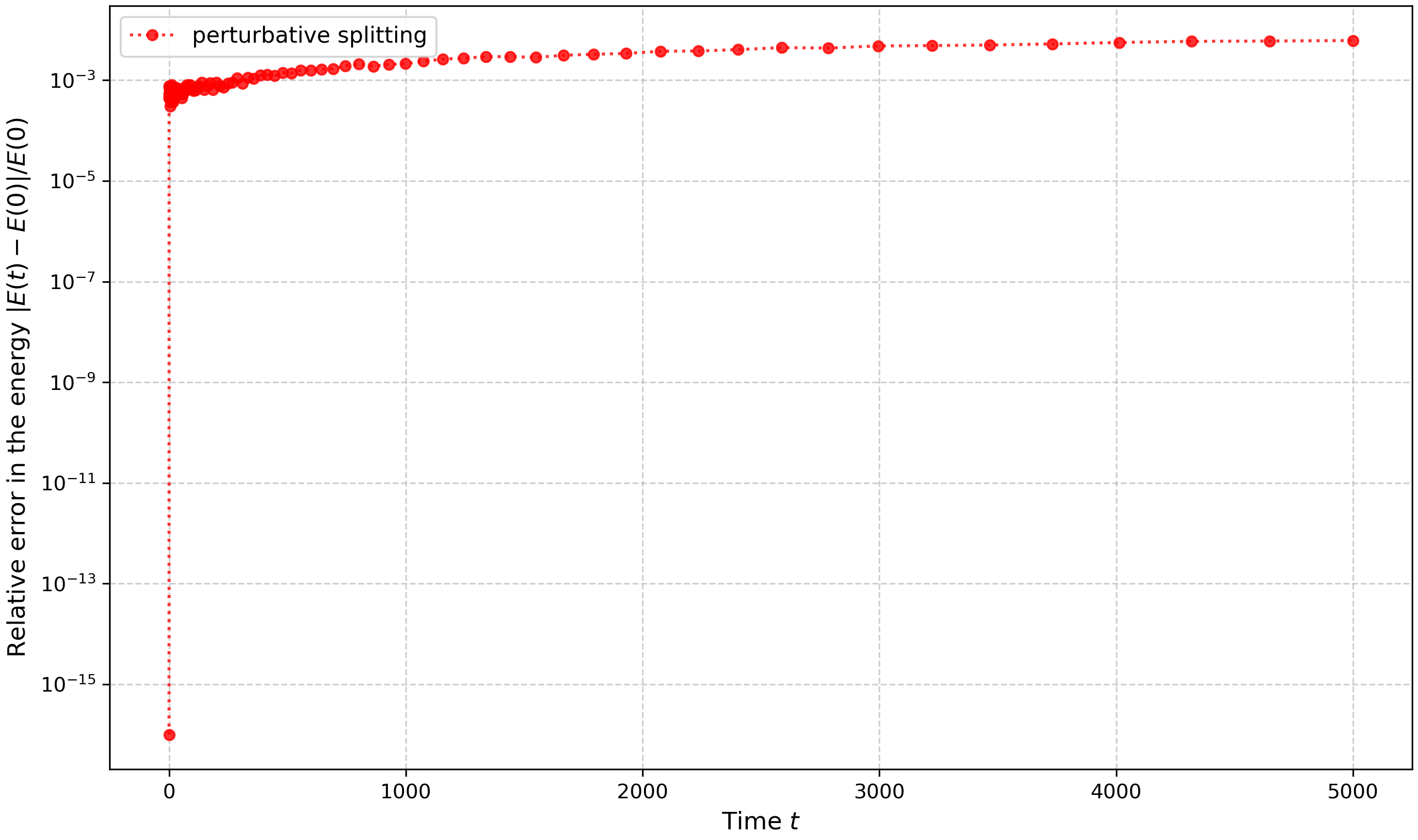}
    \caption{Long-time near-preservation for the energy associated to \eqref{ILW} of the perturbative splitting scheme up to $T=5000$, evaluated under a {\it linear} time step constraint ($\tau \sim K^{-1}$) at a fixed spatial resolution of $K=64$.}
    \label{fig:energy_very_long}
\end{figure}

Figures~\ref{fig:energy_short_long} and \ref{fig:energy_very_long} present the numerical error in the energy as a function of time. As shown in the top panel of Figure~\ref{fig:energy_short_long}, computing the classical Lie splitting scheme under a linear time step constraint ($\tau \sim K^{-1}$) results in an $O(1)$ error by $T=80$. Enforcing a quadratic time step constraint ($\tau \sim K^{-2}$) ensures near-preservation of energy for the Lie splitting scheme (bottom panel), as well as for the RK4 scheme which in addition requires this condition for numerical stability. For rigorous works in this direction we refer to Remark \ref{rem:energy-split}.
In contrast, the perturbative splitting scheme remains stable and {\it nearly preserves the energy  without this restrictive time step condition}; namely it maintains a lower error magnitude while operating under the linear time step constraint ($\tau \sim K^{-1}$), see also Remark \ref{rem:symm-low-reg} for a discussion on another class of schemes for which this was observed in simulations.

Figure~\ref{fig:energy_very_long} illustrates the long-time behavior of the perturbative splitting scheme up to $T=5000$. Under the linear step size constraint ($\tau \sim K^{-1}$), without a quadratic time step condition, the error in the energy remains bounded over long times. These results demonstrate that the new scheme achieves near-preservation of energy without requiring the restrictive quadratic time step condition mandatory for the classical RK4 or splitting methods.

\begin{figure}[htbp]
    \centering
    \includegraphics[width=0.8\textwidth]{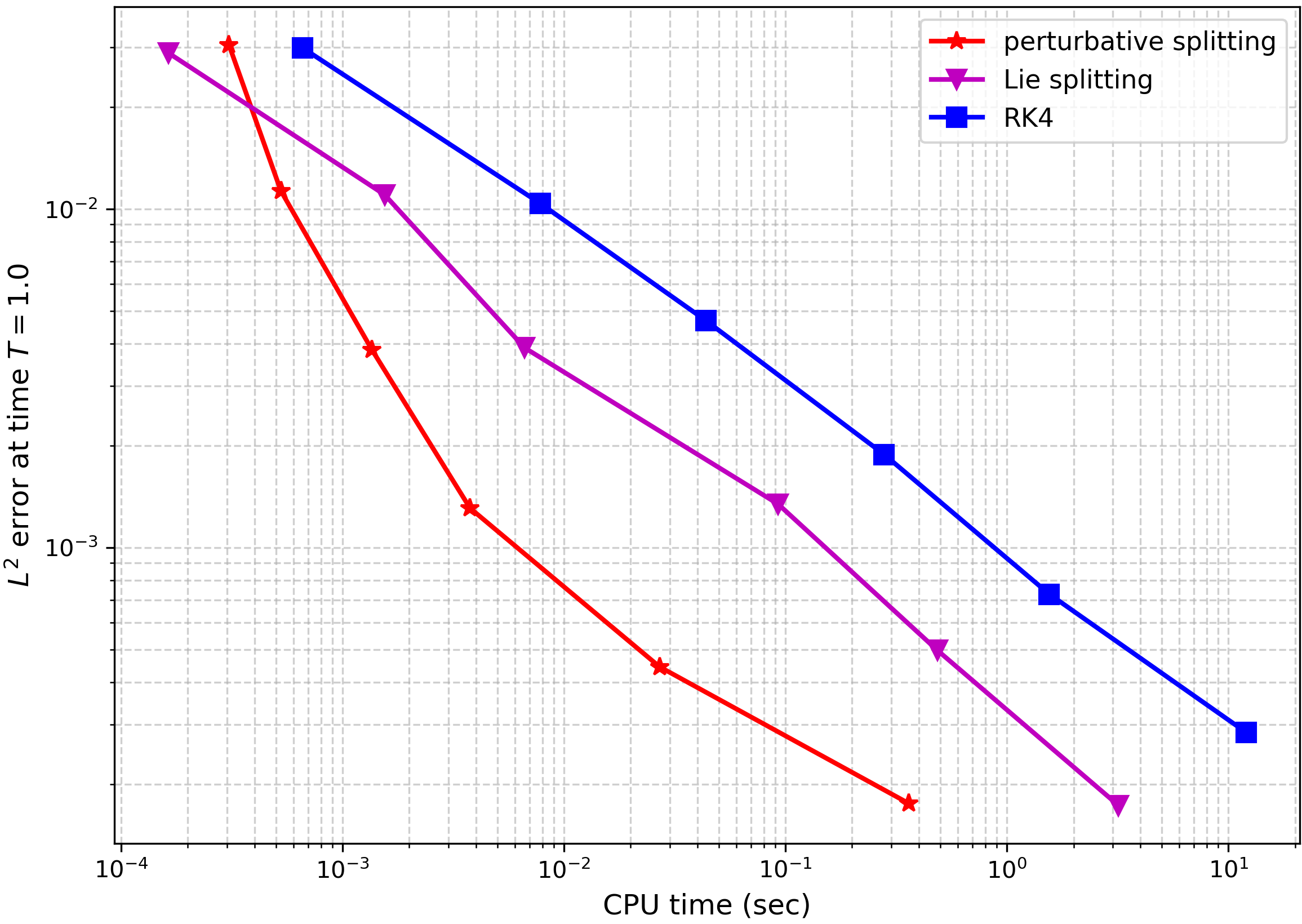}
    \caption{Convergence plot for the ILW equation in $L^2$ against computational cost at time $T=1.0$ for randomized $H^1_0$ initial data. We choose the number of Fourier modes $K$ to be powers of two ranging from $8$ to $256$. }
    \label{fig:cpu_vs_error}
\end{figure}

 In Figure \ref{fig:cpu_vs_error}, we plot the $L^2$ error against CPU cost. To ensure long-time near-energy preservation (and stability for RK4), the usual Lie splitting (purple) and RK4 schemes (blue) are constrained by a quadratic time step condition ($\tau \sim K^{-2}$). In contrast, the new perturbative splitting (red) remains stable and nearly preserves energy without a quadratic time step restriction. Operating under a linear time step ($\tau \sim K^{-1}$), it yields a highly competitive scheme.

\begin{rem}[Near-preservation of energy for classical splitting methods]\label{rem:energy-split}
In the context of splitting methods for the nonlinear Schr\"odinger equation, the necessity of a quadratic time step condition to nearly preserve energy is rigorously established by Faou~\cite{Faou2012}. There, the numerical scheme is shown to (almost) exactly solve a modified PDE at each time step. For smooth solutions and under a quadratic time step constraint, the resulting modified energy can be shown to remain close to the exact energy. We also refer to \cites{Faou2010, Faou2010II, Faou2011} and \cites{Cohen2008, Gauckler2010}, which employ normal form techniques and modulated Fourier expansions, respectively, to obtain near-preservation of the energy over exponentially long times.

The rigorous justification of the near-preservation of energy for our scheme \eqref{eq:fully-discrete}, leveraging the integrable structure of the \eqref{BO} equation, is the subject of ongoing work.
\end{rem}

\begin{rem}[Symmetric low-regularity schemes and energy preservation]\label{rem:symm-low-reg}
Symmetric low-regularity time integrators have recently been introduced 
for solving local semi-linear equations, see \cites{AlamaBronsard2024, Feng2025, AlamaBronsard2026} and references therein. While more intricate to construct than classical integrators, these schemes have the advantage that a quadratic time step restriction is not needed in the numerical simulations to obtain near-energy preservation over long times. Rigorous analysis for these methods remains open; establishing such results will require novel techniques, as existing frameworks inherently yield a quadratic time step condition (see Remark \ref{rem:energy-split}).
 \end{rem}

\subsection{Long-time dynamics on $\mathbb{R}$ : soliton resolution conjecture}

For a general introduction on theoretical and numerical works regarding the soliton resolution conjecture for nonlinear dispersive PDEs we refer to the book of Klein--Saut \cite{KS-21}*{Chapter 3.5.4}. We add the fact that using Gérard's explicit formula for \eqref{BO} on~$\mathbb{R}$ \cite{G-23}, the soliton resolution conjecture has recently been solved by Gassot--Gérard--Miller~\cite{Gassot2026}. For a first theoretical study of the long-time dynamic for \eqref{ILW}, we refer to Ifrim--Saut \cite{IS-25}, who considered small initial data and applied modified energy methods adapted for quasilinear equations.

In what follows, we perform long-time numerical simulations for two equations in the class \eqref{eq:PDE}: the \eqref{ILW} and the \eqref{KdVBO} equations, starting from various generic initial data. 

In this section, our setting is the full line $\mathbb{R}$. Hence, to perform numerical simulations, we rescale the periodic problem \eqref{eq:PDE} to a large torus and ensure that the solutions remain localized within the computational domain. In the context of the \eqref{BO} equation, the rigorous justification for this large-torus approximation is established in \cite{ABM-26}, which proves the convergence of the explicit formula on the torus to that on the real line, both on the continuous and discrete level.
\newline

\noindent
{\bf The ILW equation.}
To explore the long-time behavior of the ILW equation, Figure~\ref{fig:sol-res-ilw} tracks the evolution of initial data corresponding to a positive and a negative gaussian with a depth parameter of $\delta = 1$. The solution is shown to asymptotically decompose into one distinct solitary wave as well as small-amplitude dispersive radiation.
\newline

\begin{figure}[htbp]
\centering
\includegraphics[width=0.8\textwidth]{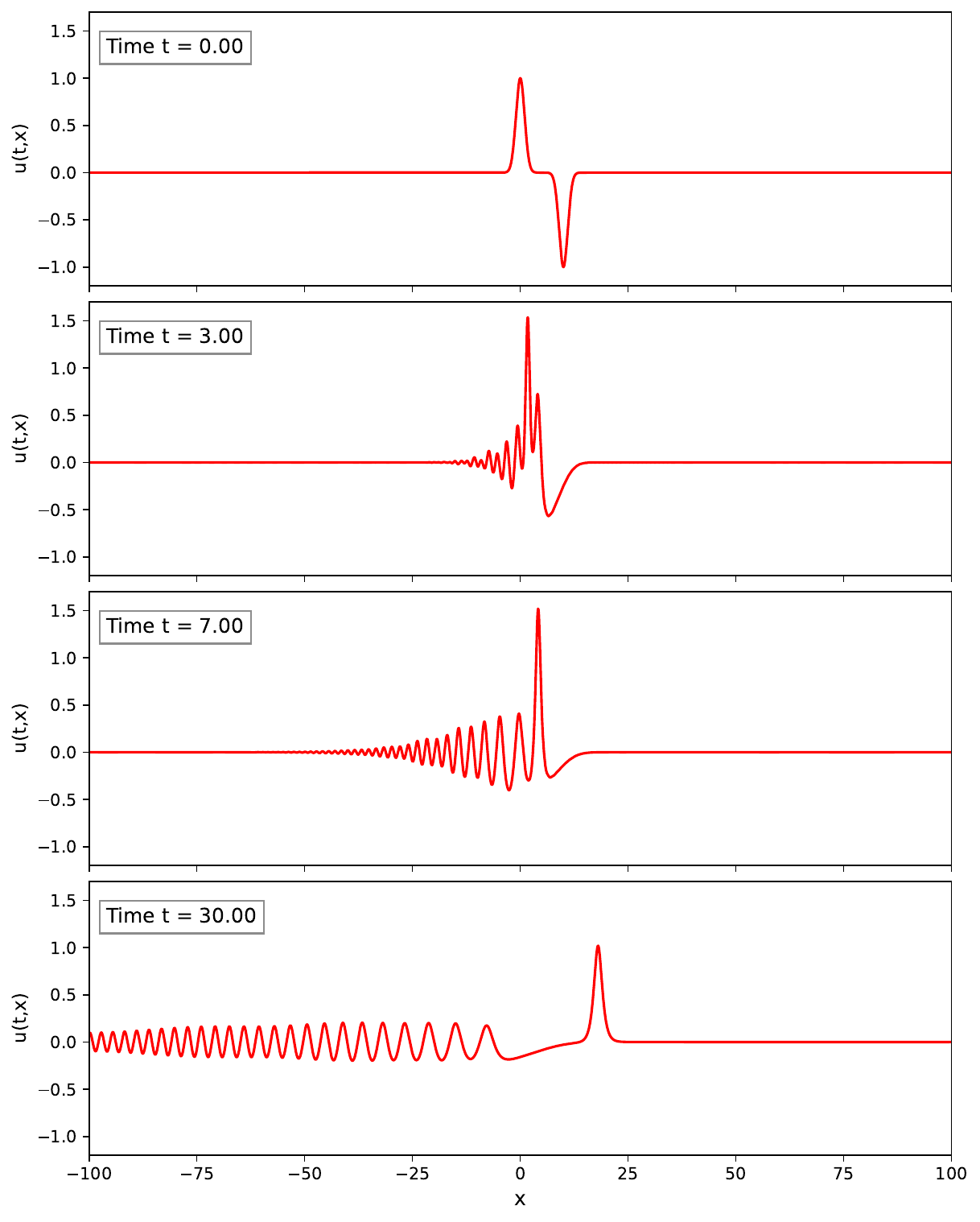} 
    
\caption{Soliton resolution for the ILW equation on the spatial domain $[-80\pi, 80\pi]$, starting from two gaussians, one positive and one negative. We took $\delta = 1$, $K = 2^{10}$, and a linear time step condition.}
\label{fig:sol-res-ilw}
\end{figure}

\noindent
{\bf The KdV--BO equation.}
While the \eqref{ILW} is a well known equation, the  \eqref{KdVBO} equation is new.
We observe long-time dynamics consistent with the soliton resolution conjecture for this equation as well. 

In Figure \ref{fig:sol-res-kdvbo} we show the interaction of generic initial data, specifically the case of two positive gaussians.  

To further investigate this phenomenon, we transition from generic Gaussian profiles to consider the \eqref{KdVBO} equation subject to the rational initial data 
\begin{equation}
    u_0(x) = \frac{2c}{1+x^2}
\end{equation}
where $c>0$ represents the mass multiplier.

For the \eqref{BO} equation, taking $c = 1$ in the above corresponds to the exact traveling wave. Furthermore, when $c = N$ (for integer $N$), this initial profile generates an exact $N$-soliton solution with no dispersive radiation \cite{Gassot2026}. For intermediate values $c \in (N-1, N)$, the BO equation resolves into exactly $N$ right-propagating solitons accompanied by left-propagating dispersive radiation.

Our numerical experiments reveal a different behavior for the KdV--BO equation. Specifically, in addition to the radiating background, which we observe for each of these values of $c$, we observe $N$ asymptotic solitons when $c \in \left(\frac{3}{2}(N-1), \frac{3}{2}N\right]$.
Thus we believe that the number of solitons $N$ can be expressed directly as a function of the mass multiplier $c$ using the ceiling function
$N = \left\lceil \frac{2c}{3} \right\rceil$.

While this is supported by numerical evidence, a rigorous theoretical justification remains open.

\begin{figure}[htbp]
\centering
    \includegraphics[width=0.8\textwidth]{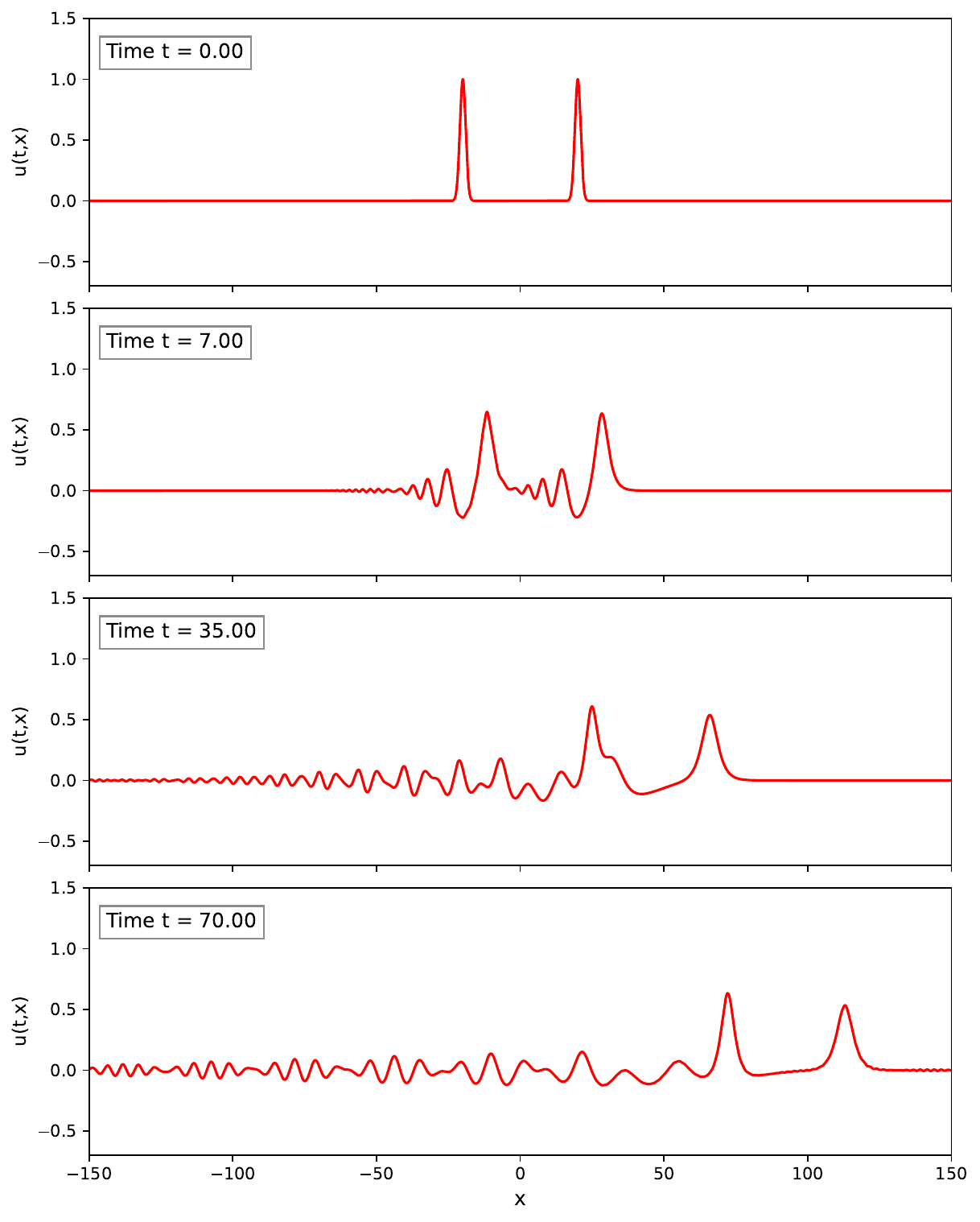} 
\caption{Soliton resolution for \eqref{KdVBO} starting from two positive Gaussians on the spatial domain $[-80\pi, 80\pi]$, computed using $K = 2^{10}$ modes under a linear time step condition.}
\label{fig:sol-res-kdvbo}
\end{figure}

\section*{Acknowledgements}
\noindent
The work of Y.A.B. was funded by the National Science Foundation through the award
DMS-2401858.
The research of B.M was partially supported by the ANR Project HEAD ANR-24-CE40-3260. The authors thank Thierry Laurens for helpful discussions on a priori estimates and Birkhoff coordinates.

\end{document}